\theoremstyle{plain}
\newtheorem{lem}{Lemma}[section]
\newtheorem{thm}{Theorem}
\theoremstyle{definition}
\newtheorem{defi}[lem]{Definition}
\theoremstyle{remark}
\newtheorem{rem}[lem]{Remark}
\newcommand{\ud}{\mathrm{d}}
\newcommand{\Ud}{\mathrm{D}}
\providecommand{\abs}[1]{\left\lvert#1\right\rvert}
\providecommand{\norm}[1]{\left\lVert#1\right\rVert}
\providecommand{\floor}[1]{\lfloor#1\rfloor}
\numberwithin{equation}{section}
\DeclareMathOperator{\dist}{dist}
\DeclareMathOperator{\supp}{supp}
\begin{document}

\title[Boundary-value problem of the heat equation]{Very weak solutions to the boundary-value problem of the homogenous heat equation}
\author{Bernard Nowakowski}
\author{Wojciech M. Zaj\k aczkowski}
\thanks{Research of both authors is partially supported by Polish KBN grant NN 201 396937}
\address{Bernard Nowakowski\\ Institute of Mathematics\\ Polish Academy of Sciences\\ \'Snia\-deckich 8\\ 00-956 Warsaw\\ Poland}
\email{bernard@impan.pl}
\address{Wojciech M. Zaj\c aczkowski\\ Institute of Mathematics\\ Polish Academy of Sciences\\ \'Sniadeckich 8\\ 00-956 Warsaw\\ Poland\\ and \\ Institute of Mathematics and Crypto\-logy\\ Military University of Technology\\ Kaliskiego 2\\ 00-908 Warsaw\\ Poland}
\email{wz@impan.pl}
\subjclass[2000]{35K05, 35K20}
\keywords{very weak solutions, integral equation, non-regular solutions, anisotropic Sobolev-Slobodecki spaces, boundary-value problem, heat equation}
\begin{abstract}
    We consider the homogeneous heat equation in a domain $\Omega$ in $\mathbb{R}^n$ with vanishing initial data and the Dirichlet boundary condition. We are looking for solutions in $W^{r,s}_{p,q}(\Omega\times(0,T))$, where $r < 2$, $s < 1$, $1 \leq p < \infty$, $1 \leq q \leq \infty$. Since we work in the $L_{p,q}$ framework any extension of the boundary data and integration by parts are not possible. Therefore, the solution is represented in integral form and is referred as \emph{very weak} solution. The key estimates are performed in the half-space and are restricted to $L_q(0,T;W^{\alpha}_p(\Omega))$, $0 \leq \alpha < \frac{1}{p}$ and $L_q(0,T;W^{\alpha}_p(\Omega))$, $\alpha \leq 1$. Existence and estimates in the bounded domain $\Omega$ follow from a perturbation and a fixed point arguments.
\end{abstract}

\maketitle

\section{Introduction}\label{s1}

We examine the following initial boundary-value problem
\begin{equation}\begin{aligned}\label{p111}
    &u,_t -\triangle u = 0  & &\textrm{in } \Omega\times (0,T) =: \Omega^T,\\
    &u = \varphi & &\textrm{on } S\times(0,T) =: S^T,\\
    &u\vert_{t=0} = 0 & &\textrm{in } \Omega\times\{0\},
    \end{aligned}
\end{equation}
where $\Omega$ is a bounded subset in $\mathbb{R}^n$ with the boundary $S$ or the entire half-space $\mathbb{R}^n_+$. We are intensely interested in the problem of maximal regularity of solutions in dependence on the boundary data. The solvability and the maximal regularity of \eqref{p111} has been studied by many authors under various requirements on the boundary data. Let us briefly outline certain results which are the closest to the intended contribution of this work. For a full summary of the research into solvability of \eqref{p111} we refer the reader to the Introduction in \cite{zad}. 

The classic case, when $\varphi \in W^{2 - \frac{1}{p},1 - \frac{1}{2p}}_p(S^T)$, $p > 1$, was widely studied in \cite[Ch. 4, $\S$3, $\S$4]{lad} (for a different approach see also \cite[Thm. 4.2]{gris}) and then extended in \cite{sol2}. Recently, an analogous result (see \cite[Thm. 2.1]{denk}) was obtained for vector-valued parabolic initial-boundary value problem of general type.  

For anisotropic boundary data it was shown in \cite[Thm. 3.1]{weid} that the maximal regularity of solutions in $W^{2,1}_{p,q}(\Omega^T)$-space (see Definition \ref{don}) can be achieved only when $\varphi \in L_q(0,T;W_p^{2 - \frac{1}{p}}(S)) \cap F^{1 - \frac{1}{2p}}_{q,p}(0,T;L_p(S))$, where $\frac{3}{2} < p \leq q < \infty$ and $F^{\alpha}_{q,p}(0,T;L_p(S))$ is a Lizorkin-Triebel space. Not much later, this result was improved for any $p$ and $q$ satisfying $1 < p, q < \infty$ in the case of general equations of parabolic type (see  \cite[Thm. 2.3]{denk}). 

The aim of this paper is to prove the existence and uniqueness of such solutions to problem \eqref{p111} that have the maximal regularity of $L_q(0,T;L_p(\Omega))$ or $L_q(0,T;W^1_p(\Omega))$, where $1 \leq p < \infty$, $1 \leq q \leq \infty$. In standard approach we could try to incorporate the classical regularizer technique from \cite{sol1} but it requires more regularity for the boundary data than we want to assume. Therefore, we need another approach which is based on the concept of very weak solutions. Also note that the case $W^{1,0}_{2,2}(\Omega^T) = L_2(0,T;H^1(\Omega))$ corresponds to the regularity of weak solutions but the energy estimate in this space cannot be obtained in the standard way. This only confirms that we need another definition of solution to problem \eqref{p111}.

\begin{defi}\label{def1}
    We say that a function $u$ is a very weak solution to the problem \eqref{p111} if and only if it satisfies the following integral equation
    \begin{equation}\label{r30}
        u(x,t) = \int_0^t\!\!\!\int_S n_i(\xi)\cdot \frac{\partial\Gamma(x-\xi,t-\tau)}{\partial \xi_i}\mu(\xi,\tau)\, \ud S_\xi \ud \tau,
    \end{equation}
    where $\mu$ is an unknown function called the density of double layer, which depends on the boundary condition $\varphi$ and it has to be calculated separately, $n$ is the unit outward vector and $\Gamma$ is the fundamental solution to the heat equation and is given by the formula
    \begin{equation*}
        \Gamma(x,t) = \begin{cases}
                          \frac{1}{\big(4\pi t\big)^{\frac{n}{2}}} e^{-\frac{\abs{x}^2}{4t}} & t > 0,\\
                          0 & t < 0.
                      \end{cases}
    \end{equation*}
	As mentioned, the function $\mu$ is a priori unknown but it is a solution to the Fredholm integral equation of second order
	\begin{equation}\label{eq1.3}
		\varphi(\eta,t) = \int_0^1\!\!\!\int_S n_i(\xi) \frac{\partial \Gamma(\eta - \zeta,t - \tau)}{\partial \xi} \mu(\xi,\tau)\ud S_\xi\, \ud \tau - \frac{1}{2}\mu(\eta,t), \qquad \eta \in S,
	\end{equation}
	which we obtain from \eqref{r30} after passing with $x \to \xi \in S$ and using \eqref{p111}$_2$. By $\ud S_\xi$, $\xi \in S$, we denote the measure of $S$.
\end{defi}
For a deeper discussion of the above definition of the solution we refer the reader to \cite[Ch. $4$, $\S$1]{lad}.

To prove the existence of the very weak solutions we solve equation \eqref{eq1.3}. Subsequently, to find the estimates of solutions first we consider the model problem
\begin{equation}\begin{aligned}\label{p1}
    &u,_t -\triangle u = 0  & &\textrm{in } \mathbb{R}^n_{+}\times (0,T),\\
    &u = \varphi & &\textrm{in } \mathbb{R}^{n - 1}\times(0,T),\\
    &u\vert_{t=0} = 0 & &\textrm{in } \mathbb{R}^n_+\times\{t = 0\},
    \end{aligned}
\end{equation}
to which the solution has the form
\begin{equation}\label{eq1.4}
        \begin{aligned}
            u(x,t) &= -2 \int_0^t\!\!\! \int_{\mathbb{R}^{n - 1}} \frac{\partial\Gamma(x' - y',x_n,t - \tau)}{\partial x_n}\varphi(y',\tau)\, \ud y'\, \ud \tau \\
            &= \frac{1}{(4\pi)^{\frac{n}{2}}} \int_0^t\!\!\! \int_{\mathbb{R}^{n - 1}} \frac{x_n}{(t - \tau)^{\frac{n + 2}{2}}}e^{-\frac{\abs{x' - y'}^2 + x_n^2}{4(t - \tau)}}\varphi(y',\tau)\, \ud y'\, \ud \tau,
        \end{aligned}
\end{equation}
where $x' = (x_1,\ldots,x_{n - 1})$ and $y' = (y_1,\ldots,y_{n - 1})$ (for further details see Lemma \ref{l5}) and derive necessary estimates. Next, we introduce a partition of unity with respect to $\Omega$ in \eqref{p111} and use the estimates obtained for the half-space.

Now we can formulate three results of this paper:
\begin{thm}\label{t1}
    Let $1 \leq p < \infty$, $1 \leq q \leq \infty$ and $r,s \geq 0$ be fixed. Suppose that $\varphi \in W^{r,s}_{p,q}(S^T)$. Then there exists a unique function $\mu \in W^{r,s}_{p,q}(S^T)$ (see Definition \ref{def1}) such that $\norm{\mu}_{W^{r,s}_{p,q}(S^T)} \leq c \norm{\varphi}_{W^{r,s}_{p,q}(S^T)}$.
\end{thm}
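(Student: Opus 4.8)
The plan is to read \eqref{eq1.3} as an abstract Fredholm equation of the second kind and to solve it by successive approximations, exploiting the causal (Volterra) structure of the parabolic double-layer kernel. Introducing the double-layer operator
\[
(K\mu)(\eta,t) = \int_0^t\!\!\!\int_S n_i(\xi)\,\frac{\partial\Gamma(\eta-\xi,t-\tau)}{\partial\xi_i}\,\mu(\xi,\tau)\,\ud S_\xi\,\ud\tau,
\]
equation \eqref{eq1.3} becomes $\varphi = K\mu - \tfrac12\mu$, i.e. $\mu = 2K\mu - 2\varphi$, or $(\Id - 2K)\mu = -2\varphi$. Thus the whole statement reduces to two claims: (i) $K$ maps $W^{r,s}_{p,q}(S^T)$ boundedly into itself, and (ii) the Neumann series $\mu = -2\sum_{k\geq 0}(2K)^k\varphi$ converges in $W^{r,s}_{p,q}(S^T)$. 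Claim (ii) will simultaneously yield existence, uniqueness and the estimate $\norm{\mu}_{W^{r,s}_{p,q}(S^T)}\leq c\,\norm{\varphi}_{W^{r,s}_{p,q}(S^T)}$ with $c = 2\sum_{k\geq 0}\norm{(2K)^k}$.

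For the boundedness in (i) I would first make the kernel explicit: a direct computation gives $n_i(\xi)\,\partial_{\xi_i}\Gamma(\eta-\xi,t-\tau) = \frac{n(\xi)\cdot(\eta-\xi)}{2(t-\tau)}\,\Gamma(\eta-\xi,t-\tau)$. Here the smoothness of $S$ is decisive: since $\eta,\xi\in S$ lie on a $C^2$ surface, $\abs{n(\xi)\cdot(\eta-\xi)}\leq c\,\abs{\eta-\xi}^2$, which cancels the apparent singularity and bounds the kernel by the weakly singular parabolic expression $c\,(t-\tau)^{-\frac{n}{2}-1}\abs{\eta-\xi}^2 e^{-\abs{\eta-\xi}^2/(4(t-\tau))}$. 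Integrating this over the $(n-1)$-dimensional surface $S$ and estimating the remaining time factor produces an integrable convolution kernel of order $(t-\tau)^{-\gamma}$ with $\gamma<1$; boundedness on $L_q(0,T;L_p(S))$ then follows from Minkowski's and Young's inequalities, while boundedness on the fractional spaces $W^{r,s}_{p,q}(S^T)$ is obtained by applying the same estimates to the spatial and temporal difference quotients of the kernel and controlling the resulting Slobodetskii seminorms.

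The convergence in (ii) is where causality does the work. Because the $\tau$-integral runs only up to $t$, composing $K$ with itself convolves the time kernels, and each composition contributes a factor of the form $\int_\sigma^t (t-\tau)^{-\gamma}(\tau-\sigma)^{-\gamma}\,\ud\tau$, a Beta integral. Iterating $k$ times yields a bound of the type $\norm{K^k}\leq C^k\,\Gamma(1-\gamma)^k/\Gamma(k(1-\gamma))$, which is summable for every fixed $T$; equivalently, on a short interval $[0,T_0]$ the norm of $2K$ drops below $1$, making $\Id - 2K$ a contraction, and the causal structure lets me iterate the solution across consecutive subintervals $[jT_0,(j+1)T_0]$ to cover $[0,T]$. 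Either route delivers the bounded inverse $\mu = -2(\Id - 2K)^{-1}\varphi$ and the asserted estimate.

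The hard part will be establishing (i) for the full anisotropic Slobodetskii scale, i.e. for all $r,s\geq 0$ together with the endpoints $p=1$ and $q=\infty$: controlling the fractional seminorms of $K\mu$ forces one to estimate difference quotients of the singular kernel near the diagonal $\xi=\eta$, $\tau=t$, and to pin down the precise power of $(t-\tau)$ that both guarantees integrability and produces the Beta-function decay of $\norm{K^k}$ used in (ii). Uniformity in $q$ up to $q=\infty$, and the coupling between the spatial order $r$ and the temporal order $s$, are the most delicate points; the cancellation $n(\xi)\cdot(\eta-\xi)=O(\abs{\eta-\xi}^2)$ coming from the regularity of $S$ is exactly what keeps these estimates finite.
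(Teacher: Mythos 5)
Your overall architecture coincides with the paper's: both treat \eqref{eq1.3} as a second-kind equation of Volterra type in $t$ and solve it by a convergent Neumann series, with the causal structure producing the factorial (equivalently, Beta-function) decay that makes the series converge on all of $(0,T)$ rather than only for small $T$. The one structural difference is how the singular kernel is tamed before iterating: the paper does not work with the weakly singular kernel directly but first iterates the equation $l\geq n+1$ times (citing Pogorzelski for the facts that the iterated kernels $N_l$ become bounded and that solutions of the iterated and original equations coincide), and then runs the successive approximations against the \emph{bounded} kernel $N_l$, obtaining the clean bound $\abs{N_l}^k\abs{S}^k T^k/k!$. Your route --- keeping the weakly singular kernel, using the surface cancellation $n(\xi)\cdot(\eta-\xi)=O(\abs{\eta-\xi}^2)$ to reduce it to a $(t-\tau)^{-\gamma}$ convolution with $\gamma<1$, and accumulating Beta integrals --- is a legitimate and somewhat more self-contained alternative for the $L_q(0,T;L_p(S))$ part of the norm.

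There is, however, a genuine gap in your claim (i) for the full scale $r,s\geq 0$. To control the terms $\Ud^{r'}_\xi(K\mu)$ and $\partial^{s'}_t(K\mu)$ of the $W^{r,s}_{p,q}$-norm you propose to ``apply the same estimates to the spatial and temporal difference quotients of the kernel,'' i.e.\ to let the derivatives and differences fall on the kernel. This does not close: each spatial derivative of $\partial_{n_\xi}\Gamma(\eta-\xi,t-\tau)$ costs a factor of order $(t-\tau)^{-1/2}$ after integration over $S$ and, worse, destroys the cancellation $n(\xi)\cdot(\eta-\xi)=O(\abs{\eta-\xi}^2)$ on which your weak-singularity bound rests, so already for $r\geq 1$ the resulting time singularity is non-integrable. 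The paper avoids this by exploiting the convolution structure $N(\xi-\eta,t-\tau)$: it introduces a partition of unity on $S$, integrates by parts, and transfers \emph{every} derivative and \emph{every} difference quotient (Lemmas \ref{reg_osz}, \ref{niereg_osz} and Remark \ref{czw_wyr}) onto the data $g_l$, so that the kernel only ever needs to be bounded in $L_\infty$ and the same factorial bound applies to each of the four pieces of the norm, with constants $\abs{\Phi_{r'}}$, $\abs{\Phi_{s'}}$, $\abs{\Psi}$, $\abs{\Theta}$ built from the corresponding norms of $g_l$. Without this transfer mechanism (or an equivalent commutation argument) your plan proves the theorem only for $r,s$ small, not for all $r,s\geq 0$ as claimed.
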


\begin{thm}\label{t2}
    Suppose that 
    \begin{enumerate}
        \item\label{i1} $p \in \lbrack 1,\infty)$, $q \in \lbrack 1,\infty\rbrack$ and $\varphi \in L_q(0,T;L_p(\mathbb{R}^{n - 1}))$. Then $ u \in L_q(0,T;W^\alpha_p(\mathbb{R}^n_+))$, where $0 \leq \alpha < \frac{1}{p}$ and
            \begin{equation*}
                \norm{u}_{L_q(0,T;W^\alpha_p(\mathbb{R}^n_+))} \leq c(p,T) \norm{\varphi}_{L_q(0,T;L_p(\mathbb{R}^{n - 1}))},
            \end{equation*}
        \item\label{i2} $p \in \lbrack 1,\infty)$, $q \in \lbrack 1,\infty\rbrack$ and $\varphi \in L_q(0,T;W^{1 - \frac{1}{p}}_p(\mathbb{R}^{n - 1}))$. Then $\Ud_{x'}u \in L_q(0,T;L_p(\mathbb{R}^n_+))$ and
            \begin{equation*}
                \norm{\Ud_{x'}u}_{L_q(0,T;L_p(\mathbb{R}^n_+))} \leq c(n,p) \norm{\varphi}_{L_q(0,T;W^{1 - \frac{1}{p}}_p(\mathbb{R}^{n - 1}))},
            \end{equation*}
				where by $\Ud_{x'}$ we mean any first order partial derivative alongside tangent direction.
        \item\label{i3} $p \in \lbrack 1,\infty)$, $q \in \lbrack 1,\infty\rbrack$ and $\varphi \in W_{p,q}^{1 - \frac{1}{p}, \frac{1}{2} - \frac{1}{2p}}(\mathbb{R}^{n - 1}\times (0,T))$. Then $\partial_{x_n}u \in L_q(0,T;L_p(\mathbb{R}^n_+))$ and
            \begin{equation*}
                \norm{\partial_{x_n}u}_{L_q(0,T;L_p(\mathbb{R}^n_+))} \leq c(n,p)\norm{\varphi}_{W_{p,q}^{1 - \frac{1}{p}, \frac{1}{2} - \frac{1}{2p}}(\mathbb{R}^{n - 1}\times (0,T))},
            \end{equation*}
    \end{enumerate}
\end{thm}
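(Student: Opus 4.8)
The plan is to base everything on the explicit representation \eqref{eq1.4} (justified in Lemma~\ref{l5}) and to exploit that, for each fixed $x_n>0$, the map $\varphi\mapsto u(\cdot,x_n,\cdot)$ is a convolution in the tangential variable $x'$ and in time. Writing the kernel as
\[
K(x',x_n,\sigma)=\frac{1}{(4\pi)^{n/2}}\frac{x_n}{\sigma^{(n+2)/2}}e^{-(\abs{x'}^2+x_n^2)/(4\sigma)},
\]
a direct Gaussian integration gives the tangential $L_1$-mass
\[
g(x_n,\sigma):=\int_{\mathbb{R}^{n-1}}K(x',x_n,\sigma)\,\ud x'=\frac{x_n}{\sqrt{4\pi}\,\sigma^{3/2}}e^{-x_n^2/(4\sigma)},
\]
and one checks the two identities $\int_0^\infty g(x_n,\sigma)\,\ud\sigma=1$ for every $x_n>0$ and $\norm{g(\cdot,\sigma)}_{L_p(0,\infty;\,\ud x_n)}=c_p\,\sigma^{1/(2p)-1}$. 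Thus, by Young's inequality in $x'$, all three estimates reduce to one-dimensional bounds in $(x_n,t)$ in which the time integration is a Riemann--Liouville-type operator $\Phi\mapsto\int_0^t(t-\tau)^{\gamma-1}\Phi(\tau)\,\ud\tau$ with an explicit exponent $\gamma$ read off from the $\sigma$-scaling of the relevant kernel.

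For item~\ref{i1} I would estimate the Slobodecki seminorm of $u(\cdot,t)$ on $\mathbb{R}^n_+$ by splitting into tangential and normal differences. Reducing the tangential convolution by Young and then taking $\norm{\cdot}_{L_p(\ud x_n)}$ by Minkowski's integral inequality, a difference of order $\alpha$ (tangential or normal) multiplies the kernel by the parabolic factor $\sigma^{-\alpha/2}$, so that
\[
\norm{u(\cdot,t)}_{W^\alpha_p(\mathbb{R}^n_+)}\leq c\int_0^t(t-\tau)^{\frac{1}{2p}-1-\frac{\alpha}{2}}\,\norm{\varphi(\cdot,\tau)}_{L_p(\mathbb{R}^{n-1})}\,\ud\tau .
\]
The kernel $(t-\tau)^{\frac{1}{2p}-1-\frac{\alpha}{2}}$ lies in $L_1(0,T)$ precisely when $\tfrac1{2p}-\tfrac{\alpha}{2}>0$, i.e.\ $\alpha<\tfrac1p$; Young's inequality in $t$ then yields the claim with a constant $c(p,T)$ coming from the $L_1(0,T)$-norm of that kernel. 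The restriction $\alpha<1/p$ and the $T$-dependence both originate from this single integrability condition.

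For item~\ref{i2} a full tangential derivative is taken, so the previous scheme lands on the \emph{critical} exponent: placing $\partial_{x_j}$ on the kernel costs an extra $\sigma^{-1/2}$, which — since $1-\tfrac1p$ is only a fractional amount of smoothness, so $\partial_{x'}\varphi$ is not available in $L_p$ — cannot be compensated by Young's inequality alone, the reduced time kernel scaling like $\sigma^{-1}$. The plan is to exploit the cancellation $\int_{\mathbb{R}^{n-1}}\partial_{x_j}K(x',x_n,\sigma)\,\ud x'=0$ (oddness of $\partial_{x_j}K$ in $x_j$) to rewrite
\[
\partial_{x_j}u=\int_0^t\!\!\int_{\mathbb{R}^{n-1}}\partial_{x_j}K(x'-y',x_n,t-\tau)\,[\varphi(y',\tau)-\varphi(x',\tau)]\,\ud y'\,\ud\tau ,
\]
and to absorb the difference $\varphi(y',\tau)-\varphi(x',\tau)$ using the Slobodecki characterization of $W^{1-\frac1p}_p(\mathbb{R}^{n-1})$. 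Since $\abs{y'-x'}\sim\sqrt\sigma$ on the bulk of the Gaussian, the extra factor $\abs{y'-x'}^{1-\frac1p}$ supplies exactly the missing half-order, and a Hölder argument against the Gagliardo seminorm closes the estimate with a scale-invariant constant $c(n,p)$.

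Item~\ref{i3} is the part I expect to be the main obstacle, because it is genuinely anisotropic: the normal derivative $\partial_{x_n}$ has parabolic order one, matching \emph{both} the tangential order $1-\tfrac1p$ and the temporal order $\tfrac12-\tfrac1{2p}$ of the data space, and a plain Young estimate is again only borderline (the reduced kernel $\partial_{x_n}g$ scales like $\sigma^{-\frac32+\frac1{2p}}$ in $L_p(\ud x_n)$). The plan is to use that $\partial_{x_n}g$ is mean-zero in $x_n$ \emph{and} mean-zero in $\sigma$ — the latter being a consequence of $\int_0^\infty g(x_n,\sigma)\,\ud\sigma\equiv1$ — and to play these two cancellations against the tangential and temporal difference seminorms of $\varphi$ simultaneously, respecting the parabolic scaling $\abs{y'-x'}\sim\abs{t-\tau}^{1/2}\sim\sqrt\sigma$. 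Concretely I would subtract from $\varphi(y',\tau)$ its value at the parabolically matched point and control the two resulting differences by the two seminorms of $W^{1-\frac1p,\,\frac12-\frac1{2p}}_{p,q}(\mathbb{R}^{n-1}\times(0,T))$. The delicate point, and the crux of the whole theorem, is that even after these subtractions the time operator is exactly critical, so the final bound must come not from Young's inequality but from the genuine, scale-invariant cancellation, which is precisely what produces the $T$-independent constant $c(n,p)$.
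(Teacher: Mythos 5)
Your treatment of items \eqref{i1} and \eqref{i2} follows essentially the same route as the paper: Lemmas \ref{l7} and \ref{l14} reduce the $L_p$- and Slobodecki-norms to a one-dimensional time convolution with kernel $\tau^{\frac{1}{2p}-1-\frac{\alpha}{2}}$ (integrable exactly when $\alpha<\frac1p$, whence $c(p,T)$), splitting the Gagliardo seminorm into a normal-difference part $I_1$ and a tangential-difference part $I_2$ just as you propose; and Lemma \ref{l6} uses precisely your cancellation $\int \Ud_{y'}\partial_{x_n}\Gamma\,\ud y'=0$ (Lemma \ref{l4}) to insert the difference $\varphi(x'-y',\cdot)-\varphi(x',\cdot)$, integrates out $\tau$ to get the kernel $(\abs{y'}^2+x_n^2)^{-n/2}$, and closes with H\"older against the $W^{1-\frac1p}_p$ seminorm. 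Those two parts are sound and aligned with the paper.

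Item \eqref{i3} is where your plan has a genuine gap, and you correctly sense it is the crux. The missing idea is a \emph{decomposition of the kernel before any subtraction}: the paper uses that $\tau^{-n/2}e^{-(\abs{y'}^2+x_n^2)/4\tau}$ solves the heat equation, so $\partial^2_{x_nx_n}=\partial_\tau-\Delta_{y'}$ on it, which splits $\partial_{x_n}u=I_1+I_2$ into two \emph{separate} integrals. The $\partial_\tau$-piece is paired (via Lemma \ref{l4}) with the purely spatial difference $\varphi(x'-y',t-\tau)-\varphi(x',t-\tau)$ and estimated exactly as in item \eqref{i2}; the $\Delta_{y'}$-piece is paired with the purely temporal difference $\varphi(x'-y',t-\tau)-\varphi(x'-y',t)$, after which integrating out $y'$, splitting the time kernel $\tau^{-3/2}e^{-x_n^2/16\tau}=\tau^{-\alpha_1}e^{-\cdots}\cdot\tau^{-\alpha_2}e^{-\cdots}$ by H\"older, and integrating in $x_n$ produces exactly the weight $\tau^{-1-q(\frac12-\frac1{2p})}$ of the temporal Slobodecki seminorm. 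Your proposal instead subtracts a single value of $\varphi$ at a ``parabolically matched point'' and invokes two cancellations ($\int_{\mathbb{R}^{n-1}}\partial_{x_n}K\,\ud y'$ is \emph{not} zero, and $\int_0^\infty\partial_{x_n}g\,\ud\sigma=0$) acting on the \emph{same} undecomposed kernel; a single subtraction splits $\varphi$ into two differences but leaves both multiplied by the full kernel $\partial_{x_n}K$, whose reduced time kernel is, as you yourself compute, exactly critical ($\sim\sigma^{-1}$) against either seminorm alone. Without first converting the two normal derivatives into one time derivative plus tangential second derivatives, there is no mechanism that lets each difference see only the part of the kernel with the matching extra decay, and the argument does not close. (Minor further points: the paper's Lemma \ref{l15} in fact records a constant $c(n,p,T)$, and your claimed $T$-independence is not established by the sketch.)
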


\begin{thm}\label{t3}
	Suppose that $p \in \lbrack 1,\infty)$, $q \in \lbrack 1,\infty\rbrack$ and $\varphi \in W_{p,q}^{1 - \frac{1}{p}, \frac{1}{2} - \frac{1}{2p}}(S^T)$. Then $u \in L_q(0,T;W^1_p(\Omega))$ and
	\begin{equation*}
		\norm{u}_{L_q(0,T;W^1_p(\Omega))} \leq c_{n,p,q,\Omega,T}\norm{\varphi}_{L_q(0,T;L_p(S))} + c_{n,p,q,\Omega}\norm{\varphi}_{W_{p,q}^{1 - \frac{1}{p}, \frac{1}{2} - \frac{1}{2p}}(S^T)}.
	\end{equation*}
\end{thm}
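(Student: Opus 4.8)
The plan is to combine the solvability of the Fredholm equation \eqref{eq1.3} from Theorem~\ref{t1} with the half-space estimates of Theorem~\ref{t2} through a partition of unity and a flattening of the boundary, absorbing the lower-order and perturbation terms by a fixed-point argument. First I would invoke Theorem~\ref{t1} with $r=1-\frac1p$, $s=\frac12-\frac{1}{2p}$: since $\varphi\in W^{1-\frac1p,\frac12-\frac{1}{2p}}_{p,q}(S^T)$ there is a unique density $\mu\in W^{1-\frac1p,\frac12-\frac{1}{2p}}_{p,q}(S^T)$ solving \eqref{eq1.3}, with $\norm{\mu}_{W^{1-\frac1p,\frac12-\frac{1}{2p}}_{p,q}(S^T)}\le c\norm{\varphi}_{W^{1-\frac1p,\frac12-\frac{1}{2p}}_{p,q}(S^T)}$. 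The very weak solution $u$ is then defined by the double-layer potential \eqref{r30}, and uniqueness of $u$ follows from that of $\mu$; it remains to prove the asserted estimate.

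\emph{Localization and flattening.} Next I would fix a finite cover of $\overline\Omega$ by balls, split into interior balls and boundary balls, and choose a subordinate partition of unity $\{\zeta^{(k)}\}$. In each boundary ball I introduce coordinates that straighten $S$ to a piece of $\mathbb R^{n-1}$; under this change of variables the kernel $n_i(\xi)\partial_{\xi_i}\Gamma$ in \eqref{r30} splits into the flat half-space kernel of \eqref{eq1.4}, with local boundary density $\zeta^{(k)}\mu$, plus a remainder caused by the curvature of $S$ and by the variable coefficients of the transformed Laplacian. The localized piece $\zeta^{(k)}u$ therefore coincides, up to this remainder and up to the commutator contributions $[\triangle,\zeta^{(k)}]u$, with a half-space solution of \eqref{p1} whose data is $\zeta^{(k)}\mu$. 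To the principal part I apply Theorem~\ref{t2}: part~(\ref{i1}) (with $\alpha=0$) bounds $\norm{\zeta^{(k)}u}_{L_q(0,T;L_p)}$ by $\norm{\zeta^{(k)}\mu}_{L_q(0,T;L_p)}$ with a $T$-dependent constant, part~(\ref{i2}) bounds the tangential derivatives, and part~(\ref{i3}) the normal derivative, both in terms of $\norm{\zeta^{(k)}\mu}_{W^{1-\frac1p,\frac12-\frac{1}{2p}}_{p,q}}$ with constants independent of $T$.

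\emph{Interior part and summation.} On the interior balls the distance between $\supp\zeta^{(k)}$ and $S$ is positive, so in \eqref{r30} the kernel $\partial_{\xi_i}\Gamma(x-\xi,t-\tau)$ together with all its $x$-derivatives stays smooth and bounded, and a direct estimate controls $\norm{\zeta^{(k)}u}_{L_q(0,T;W^1_p)}$ by $\norm{\mu}_{L_q(0,T;L_p(S))}$. Summing the boundary and interior estimates over $k$, using Theorem~\ref{t1} to pass from $\mu$ back to $\varphi$, and collecting the two types of right-hand sides yields the two terms in the assertion: the $L_q(0,T;L_p(S))$ term (with $T$-dependent constant) from part~(\ref{i1}) and the interior part, and the $W^{1-\frac1p,\frac12-\frac{1}{2p}}_{p,q}(S^T)$ term (with $T$-independent constant) from parts~(\ref{i2})--(\ref{i3}).

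\emph{Main obstacle.} The delicate point is the treatment of the remainder and commutator terms. The commutators $[\triangle,\zeta^{(k)}]u$ and the curvature remainders involve at most first-order derivatives of $u$ multiplied by bounded (and, after shrinking the balls, small) coefficients, so they have the form (small)$\cdot\norm{u}_{L_q(0,T;W^1_p(\Omega))}$ plus genuinely lower-order terms estimated through part~(\ref{i1}). I expect the hard part to be arranging these so that the top-order contribution can be moved to the left-hand side: this is where the perturbation and fixed-point arguments enter, choosing the diameter of the cover small (and exploiting the $T$-dependence in part~(\ref{i1})) so that the perturbation operator is a contraction in $L_q(0,T;W^1_p(\Omega))$. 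Once the top-order term is absorbed, the remaining lower-order terms reproduce exactly the first summand $c_{n,p,q,\Omega,T}\norm{\varphi}_{L_q(0,T;L_p(S))}$, completing the proof.
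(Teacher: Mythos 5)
Your overall architecture (localize, flatten, apply Theorem \ref{t2} to the principal part, absorb the perturbation by refining the cover) matches the paper's, but there are two genuine gaps. First, the commutator terms are not small. If $\{\zeta^{(k)}\}$ is subordinate to a cover of diameter $\lambda$, then $\abs{\nabla\zeta^{(k)}}\sim\lambda^{-1}$ and $\abs{\triangle\zeta^{(k)}}\sim\lambda^{-2}$, so $[\triangle,\zeta^{(k)}]u=-2\nabla\zeta^{(k)}\cdot\nabla u-u\,\triangle\zeta^{(k)}$ is a first-order term with \emph{large} coefficients; shrinking the balls makes it worse, not better, and it cannot be moved to the left-hand side by a contraction argument. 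The only genuinely small perturbation is the one produced by the flattening map, whose coefficients are $O(\lambda)$ because $\nabla f^{(k)}(0)=0$. The paper handles the commutator differently: near $S$ the cutoffs are chosen independent of the normal variable, so $[\triangle,\eta^{(k)}]u$ involves only tangential derivatives of $u$; in the Green representation one then integrates by parts tangentially to shift that derivative onto the kernel, which places the commutator contribution in $c_{n,p,q,\lambda}\norm{u}_{L_q(0,T;L_p(\Omega))}$ --- a large constant, but acting only on the $L_p$-norm of $u$, not on $\norm{u}_{L_q(0,T;W^1_p(\Omega))}$.

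Second, this leaves $c_{n,p,q,\lambda}\norm{u}_{L_q(0,T;L_p(\Omega))}$ on the right-hand side, and eliminating it is a substantive separate step that your proposal treats as automatic (``the remaining lower-order terms reproduce exactly the first summand''). The paper proves $\norm{u}_{L_q(0,T;L_p(\Omega))}\leq c\norm{\varphi}_{L_q(0,T;L_p(S))}$ by returning to the double-layer representation \eqref{r30} with the density $\mu$ furnished by Theorem \ref{t1}, localizing on $S$, writing the kernel in local coordinates, and using the Mean Value Theorem together with Lemma \ref{l2} to reduce to the half-space computation of Lemma \ref{l7}; this is the only place where $\mu$ and Theorem \ref{t1} actually enter. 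Relatedly, in the localized half-space problems the Dirichlet datum is $\eta^{(k)}\varphi$ (the trace of $\eta^{(k)}u$), not $\zeta^{(k)}\mu$ as you write: the density of the double layer is not the boundary value of $u$, so Theorem \ref{t2} must be applied with $\varphi^{(k)}$ rather than with the localized density.
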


The reader has surely noticed that Theorem \ref{t1} concerns the existence of the density of the double layer whereas Theorems \ref{t2} and \ref{t3} provide suitable estimates in the half-space and in bounded domains of solutions in the form \eqref{r30} for $\varphi \in W^{1 - \frac{1}{p}, \frac{1}{2} - \frac{1}{2p}}_{p,q}(S^T)$. The difference between the existence of solutions and their estimates by the boundary data is particularly visible when we compare the function spaces used in all three theorems. Note that Theorem \ref{t1} covers a whole range of anisotropic Sobolev-Slobodecki spaces $W^{r,s}_{p,q}(\Omega^T)$, whereas the claims of Theorems \ref{t2} and \ref{t3} are only restricted to $L_q(0,T;W^1_p(\Omega))$ and $L_q(0,T;L_p(\Omega))$. The reason behind our choice follows from technical difficulties which appear when $r$ and $s$ are non-integers. This case will be covered in forthcoming paper.

The reader can also easily recognize that Theorem \ref{t3} contains less results than Theorem \ref{t2}. The motivation is not to extend the paper and only to show ideas of the proof in the case of a bounded domain.

Theorem \ref{t3} plays a crucial role in proofs concerning the existence of global and regular solutions to the Navier-Stokes equations in cylindrical domains in $\mathbb{R}^3$ (see e.g. \cite[Lemma 4.1]{wm2}, \cite[Appendix]{wm8}, \cite[Lemma 3.1]{ren1}, \cite[Lemma 4.3]{wm6}), because the solvability and the estimates of solutions to the heat equation with the boundary data either from $L_\infty(0,T;L_2(S))$ or from $H^{\frac{1}{2},\frac{1}{4}}(S^T)$ contribute significantly to the improvement in the regularity of weak solutions. 

This paper is divided into five sections. In Sections \ref{s1} and \ref{s2} the reader can find the description of the problem and auxiliary results required to prove all three theorems. Section \ref{s3} is devoted to the existence of solutions to problem \eqref{p111}. It contains the proof Theorem \ref{t1}. In Section \ref{s4} we present various estimates for solutions to problem \eqref{p1} which are stated in Theorem \ref{t2} and in Section \ref{s5} we give the proof of Theorem \ref{t3}.

\section{Auxiliary results}\label{s2}

In this section we collect helpful tools for further calculations and introduce the function spaces that will be used frequently in this paper.

\begin{lem}\label{l4}
    Let $\Gamma(x,t)$ be the fundamental solution of the heat equation. Then
    \begin{equation*}
        \int_{\mathbb{R}^n} \partial^r_t\Ud^s_x \Gamma(x,t)\, \ud x =
            \begin{cases}
                1 & r = s = 0, \\
                0 & r + s \geq 1,
            \end{cases}
    \end{equation*}
	where the symbol $\Ud^s_x$ denotes any derivative of order $s$ with respect to $x$.
\end{lem}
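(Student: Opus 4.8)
The plan is to dispatch the normalization case $r=s=0$ directly and then reduce every case with $r+s\ge 1$ to the integral of a single spatial derivative of a rapidly decaying function, which must vanish. For $r=s=0$ the assertion is exactly that $\Gamma(\cdot,t)$ is a probability density for each $t>0$: writing $\abs{x}^2=\sum_{j=1}^n x_j^2$, the integral factors as a product of $n$ one-dimensional Gaussian integrals, each equal to $\sqrt{4\pi t}$, and these precisely cancel the prefactor $(4\pi t)^{-n/2}$, leaving $1$.

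For the remaining cases $r+s\ge 1$ the observation I would exploit is that, for $t>0$, the kernel solves the heat equation, so $\partial_t\Gamma=\triangle\Gamma$; iterating and commuting the (smooth, for $t>0$) derivatives yields $\partial_t^r\Gamma=\triangle^r\Gamma$. Hence
\begin{equation*}
    \partial_t^r\Ud_x^s\Gamma=\Ud_x^s\triangle^r\Gamma
\end{equation*}
is a \emph{purely spatial} derivative of total order $s+2r$. Since $r,s$ are non-negative integers with $r+s\ge 1$, we have $s+2r=(r+s)+r\ge 1$, so at least one spatial derivative is present: if $s\ge 1$ factor one out of $\Ud_x^s$, and if $s=0$, $r\ge 1$, factor one $\partial_{x_j}$ out of $\triangle^r$. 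In either case $\partial_t^r\Ud_x^s\Gamma=\partial_{x_j}g$, where $g$ is again of the form (polynomial in $x$ and $t^{-1}$) times $\Gamma$.

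The final step is then to check that $\int_{\mathbb{R}^n}\partial_{x_j}g\,\ud x=0$. By the fundamental theorem of calculus in the $x_j$ variable (Fubini in the remaining variables) this holds once $g(\cdot,t)\to 0$ as $\abs{x_j}\to\infty$ and $g(\cdot,t)\in L_1(\mathbb{R}^n)$. Both are clear because every $x$-derivative of $\Gamma$ is a polynomial in $x$ and $t^{-1}$ multiplied by $\Gamma$, and Gaussian decay dominates any polynomial growth; equivalently, $\Gamma(\cdot,t)$ together with all its derivatives lies in the Schwartz class for each fixed $t>0$. (An alternative route for the pure time-derivative case $s=0$ is to differentiate the identity $\int_{\mathbb{R}^n}\Gamma\,\ud x\equiv 1$ in $t$, which the same decay justifies.)

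I expect the only point requiring genuine care to be this last step, namely verifying the rapid decay and integrability that let one discard the boundary terms at infinity (or, on the alternative route, that license differentiation under the integral sign). The rest is bookkeeping: the Gaussian normalization and the heat-equation reduction of time derivatives to powers of the Laplacian.
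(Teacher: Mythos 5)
Your proposal is correct. Note that the paper does not actually prove this lemma: it simply refers the reader to \cite[Ch.~4, \S~1]{lad}, so there is no in-paper argument to compare against. Your self-contained verification --- the Gaussian normalization for $r=s=0$, the reduction $\partial_t^r\Gamma=\triangle^r\Gamma$ valid for $t>0$ so that every case with $r+s\ge 1$ becomes the integral of an exact spatial derivative, and the observation that $\Gamma(\cdot,t)$ and all its derivatives are Schwartz in $x$ for fixed $t>0$ so the boundary terms at infinity vanish --- is complete and is the standard argument one finds in the cited reference.
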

Note that the integral above does not depend on time. The integration is carried out only with respect to spatial variables.

\begin{lem}\label{l5}
    Any solution to the problem \eqref{p1} in the half-space $x_n > 0$ has the form
    \begin{equation}
        \begin{aligned}\label{r2}
            u(x,t) &= -2 \int_0^t\!\!\! \int_{\mathbb{R}^{n - 1}} \frac{\partial\Gamma(x' - y',x_n,t - \tau)}{\partial x_n}\varphi(y',\tau)\, \ud y'\, \ud \tau \\
            &= \frac{1}{(4\pi)^{\frac{n}{2}}} \int_0^t\!\!\! \int_{\mathbb{R}^{n - 1}} \frac{x_n}{(t - \tau)^{\frac{n + 2}{2}}}e^{-\frac{\abs{x' - y'}^2 + x_n^2}{4(t - \tau)}}\varphi(y',\tau)\, \ud y'\, \ud \tau,
        \end{aligned}
    \end{equation}
    where $x' = (x_1,\ldots,x_{n - 1})$ and $y' = (y_1,\ldots,y_{n - 1})$.
\end{lem}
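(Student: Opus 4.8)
The plan is to derive \eqref{r2} by the classical method of images, producing the Dirichlet Green's function of the half-space, and then to obtain the ``any solution'' assertion from uniqueness for the heat equation. First I would introduce the reflection $\xi^{*} = (\xi',-\xi_n)$ across the hyperplane $\{x_n = 0\}$ and set
\begin{equation*}
    G(x,\xi,t) = \Gamma(x-\xi,t) - \Gamma(x-\xi^{*},t).
\end{equation*}
As a function of $(x,t)$ this $G$ solves the heat equation in $\mathbb{R}^n_+$ (both terms do, and the singularity of the second sits at $\xi^{*}\notin\mathbb{R}^n_+$), it satisfies $G(x,\xi,0)=0$ for $x\neq\xi$, and it vanishes on $\{x_n=0\}$ because there $\abs{x-\xi}=\abs{x-\xi^{*}}$. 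Thus $G$ is the Dirichlet Green's function of the half-space.

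Next I would apply the parabolic Green's representation formula on $\mathbb{R}^n_+\times(0,t)$, pairing $u$ against $G(x,\cdot,t-\cdot)$ and integrating by parts in space and time. Under the hypotheses of \eqref{p1} the initial-data term and the forcing term both vanish, leaving only the boundary contribution
\begin{equation*}
    u(x,t) = -\int_0^t\!\!\!\int_{\mathbb{R}^{n-1}} \frac{\partial G}{\partial n_\xi}(x,\xi,t-\tau)\bigg\vert_{\xi_n=0}\varphi(\xi',\tau)\,\ud\xi'\,\ud\tau,
\end{equation*}
where $n=(0,\ldots,0,-1)$ is the outward unit normal. Because $G$ is built from $\Gamma$ and its reflection, the two $\xi_n$-derivatives reinforce rather than cancel at $\xi_n=0$, so that $\partial_{\xi_n}G\vert_{\xi_n=0} = -2\,\partial_{x_n}\Gamma(x'-\xi',x_n,t-\tau)$; the outward normal flips the sign, and inserting this into the representation recovers exactly the first line of \eqref{r2}. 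Substituting the explicit Gaussian and using $\partial_{x_n}\Gamma = -\frac{x_n}{2t}\Gamma$ then yields the second, explicit line. The uniqueness part is immediate: the difference of two solutions solves \eqref{p1} with $\varphi\equiv0$ and zero initial data, hence vanishes by the maximum principle, so every solution has the form \eqref{r2}.

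The main obstacle is the rigorous justification of Green's identity on the \emph{unbounded} half-space over a finite time interval: one must show that the contributions from $\abs{x}\to\infty$ vanish, which rests on the rapid Gaussian decay of $\Gamma$ and its derivatives. A closely related delicacy is checking that the boundary trace of the right-hand side of \eqref{r2} actually returns $\varphi$ as $x_n\to0^{+}$. This is an approximate-identity argument: the kernel $-2\,\partial_{x_n}\Gamma(\,\cdot\,,x_n,\,\cdot\,)$ concentrates on the boundary as $x_n\to0$ and integrates in the tangential and time variables to $1$, consistent with the mass normalization recorded in Lemma \ref{l4}.
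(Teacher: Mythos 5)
Your proposal is correct and follows essentially the same route as the proof the paper relies on: the paper gives no argument of its own but defers to \cite[Ch.~4, \S 1]{lad}, where the half-space Poisson kernel $-2\,\partial_{x_n}\Gamma$ is obtained in exactly this way, via the image Green's function $\Gamma(x-\xi,t)-\Gamma(x-\xi^{*},t)$ and the parabolic representation formula. Your sign bookkeeping ($\partial_{\xi_n}G\vert_{\xi_n=0}=-2\,\partial_{x_n}\Gamma$, outward normal $(0,\ldots,0,-1)$) checks out, and your remarks on the decay needed to justify Green's identity on the unbounded domain and on the approximate-identity recovery of the boundary trace are the right caveats.
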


The proofs of Lemmas \ref{l4} and \ref{l5} can be found in \cite[Ch. $4$, $\S 1$]{lad}.

\begin{lem}[the general Minkowski inequality]\label{l3}
    Let $1 \leq p \leq \infty$ and let $f$ be a~measurable function on $\mathbb{R}^n\times\mathbb{R}^m$. Then
    \begin{equation*}
        \left(\int_{\mathbb{R}^n} \abs{\int_{\mathbb{R}^m} f(x,y)\, \ud y}^p\, \ud x\right)^{\frac{1}{p}} \leq \int_{\mathbb{R}^m} \left(\int_{\mathbb{R}^n} \abs{f(x,y)}^p\, \ud x\right)^{\frac{1}{p}}\, \ud y.
    \end{equation*}
\end{lem}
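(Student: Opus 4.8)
The plan is to establish the inequality first for the exponents $1 < p < \infty$ via a duality-type argument, and then dispatch the endpoints $p = 1$ and $p = \infty$ separately. At the outset I would reduce to the case $f \geq 0$: replacing $f$ by $\abs{f}$ leaves the right-hand side untouched while it can only enlarge the left-hand side, since $\abs{\int_{\mathbb{R}^m} f(x,y)\,\ud y} \leq \int_{\mathbb{R}^m} \abs{f(x,y)}\,\ud y$. I may also assume the right-hand side is finite, as otherwise there is nothing to prove. Throughout, Tonelli's theorem guarantees that all the iterated integrals of the nonnegative integrands are well defined and that the order of integration may be freely exchanged.

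For $1 < p < \infty$ write $F(x) := \int_{\mathbb{R}^m} f(x,y)\,\ud y$ and let $p' = p/(p-1)$ be the conjugate exponent. First I would split
\begin{equation*}
    \int_{\mathbb{R}^n} F(x)^p\,\ud x = \int_{\mathbb{R}^n} F(x)^{p-1}\left(\int_{\mathbb{R}^m} f(x,y)\,\ud y\right)\ud x = \int_{\mathbb{R}^m}\left(\int_{\mathbb{R}^n} F(x)^{p-1}f(x,y)\,\ud x\right)\ud y,
\end{equation*}
the last equality being Tonelli. Applying Hölder's inequality in the $x$-variable to the inner integral, with exponents $p'$ and $p$, and using the key identity $(p-1)p' = p$, I obtain
\begin{equation*}
    \int_{\mathbb{R}^n} F(x)^{p-1}f(x,y)\,\ud x \leq \left(\int_{\mathbb{R}^n} F(x)^{p}\,\ud x\right)^{\frac{1}{p'}}\left(\int_{\mathbb{R}^n} f(x,y)^{p}\,\ud x\right)^{\frac{1}{p}}.
\end{equation*}
Integrating this over $y$ and combining with the previous display gives $\norm{F}_p^{p} \leq \norm{F}_p^{p/p'}\,\bigl(\text{RHS}\bigr)$, and dividing by $\norm{F}_p^{p/p'}$ and using $p - p/p' = 1$ yields the asserted bound.

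The main subtlety — and the step I expect to need the most care — is the division by $\norm{F}_p^{p/p'}$, which is legitimate only once I know $\norm{F}_p$ is finite and nonzero. The nonzero case is trivial (if $\norm{F}_p = 0$ the inequality is immediate), but a priori finiteness is not guaranteed. I would handle this by a truncation argument: replace $F$ by $F_{N,R} := \min(F,N)\,\mathbf{1}_{B_R}$ with $B_R \subset \mathbb{R}^n$ a ball of radius $R$, which is bounded and compactly supported, hence in $L_p$; run the estimate above with $F_{N,R}^{p-1}$ in place of $F^{p-1}$ (noting $F_{N,R} \leq F$ so the bound by the right-hand side still holds), divide safely, and then let $N, R \to \infty$ invoking the monotone convergence theorem. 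Finally, the endpoint $p = 1$ reduces to an equality by Tonelli, and $p = \infty$ follows from $\esssup_x \int_{\mathbb{R}^m} \abs{f(x,y)}\,\ud y \leq \int_{\mathbb{R}^m} \esssup_x \abs{f(x,y)}\,\ud y$, which is elementary, completing the proof.
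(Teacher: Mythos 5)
Your argument is correct, but note that the paper does not prove this lemma at all: it simply cites \cite[Ch.~1, \S 2]{bes}, so there is no in-paper proof to compare against. What you give is the standard self-contained derivation: reduce to $f\geq 0$, set $F(x)=\int_{\mathbb{R}^m}f(x,y)\,\ud y$, swap the order of integration by Tonelli, apply H\"older with exponents $p'$ and $p$ using $(p-1)p'=p$, and divide by $\norm{F}_p^{p/p'}$. You correctly identify the one genuine subtlety (a priori finiteness of $\norm{F}_p$ before dividing) and your truncation $F_{N,R}=\min(F,N)\mathbf{1}_{B_R}$ together with monotone convergence handles it properly; the $p=1$ case is indeed an equality by Tonelli. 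The only place where ``elementary'' glosses over something is the endpoint $p=\infty$: the inequality $\esssup_x\int\abs{f(x,y)}\,\ud y\leq\int\esssup_x\abs{f(x,y)}\,\ud y$ requires a Fubini-type argument on the null set $\{(x,y):\abs{f(x,y)}>g(y)\}$ with $g(y):=\esssup_x\abs{f(x,y)}$ (a single exceptional null set in $x$ cannot be taken uniformly in $y$ without it), plus the measurability of $g$; both are standard but worth a sentence. Since the paper only ever invokes the lemma for $1\leq p<\infty$ anyway, this does not affect anything downstream.
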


For the detailed proof of Lemma \ref{l3} 
we refer the reader to \cite[Ch. $1$, $\S 2$]{bes}.

\begin{lem}\label{l2}
    We have
    \begin{equation*}
        \sup_{s \geq 0} s^re^{-s} = r^re^{-r}.
    \end{equation*}
\end{lem}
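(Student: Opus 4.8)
The plan is to treat this as a one-variable calculus optimization. I would set $f(s) := s^r e^{-s}$ on $[0,\infty)$ and locate its maximum by differentiation. First I would dispose of the degenerate exponent: if $r = 0$, then $f(s) = e^{-s}$ is strictly decreasing, so the supremum is $f(0) = 1$, which matches $r^r e^{-r} = 0^0 e^0 = 1$ under the usual convention $0^0 = 1$. Assume henceforth $r > 0$.

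For $r > 0$ I would record the boundary behaviour: $f(0) = 0$ and $f(s) \to 0$ as $s \to \infty$ (the exponential factor beats the power), while $f(s) > 0$ for all $s > 0$. Since $f$ is continuous and positive on the interior but vanishes at both ends of $[0,\infty)$, its supremum is a positive interior maximum, attained at an interior critical point. Differentiating gives
\begin{equation*}
    f'(s) = r s^{r-1} e^{-s} - s^r e^{-s} = s^{r-1} e^{-s}(r - s),
\end{equation*}
so for $s > 0$ the only critical point is $s = r$. The factor $s^{r-1} e^{-s}$ is positive on $(0,\infty)$, hence $f' > 0$ on $(0,r)$ and $f' < 0$ on $(r,\infty)$; thus $s = r$ is the unique maximizer. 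Evaluating yields $f(r) = r^r e^{-r}$, which is the claimed value.

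There is no genuine obstacle here; the statement is elementary. The only points requiring a word of care are the convention $0^0 = 1$ when $r = 0$ and the justification that $s^r e^{-s} \to 0$ at infinity, both of which are routine. I would therefore keep the write-up to the sign analysis of $f'$ and the evaluation at $s = r$.
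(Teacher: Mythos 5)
Your proof is correct and is the standard elementary argument; the paper itself states Lemma \ref{l2} without proof, so there is nothing to compare against beyond noting that the differentiation of $f(s)=s^re^{-s}$, the sign analysis of $f'(s)=s^{r-1}e^{-s}(r-s)$, and the evaluation at $s=r$ are exactly what one would expect the authors to have in mind. Your handling of the degenerate case $r=0$ via the convention $0^0=1$ is a reasonable extra precaution, though in the paper $r$ is always a positive exponent, so that case never arises.
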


\begin{defi}\label{don}
    We say that a function $f$ belongs to the space $W_{p,q}^{r,s}(\Omega^T)$, where $\Omega\subseteq \mathbb{R}^n$, $p,q > 1$, $r,s \geq 0$, if and only if
    \begin{multline*}
        \norm{f}_{W_{p,q}^{r,s}(\Omega^T)} = \sum_{0\leq r'\leq \floor{r}} \!\!\!\left(\int_0^T\!\!\!\left(\int_\Omega \abs{\Ud^{r'}_x\! f(x,t)}^p\, \ud x\right)^{q/p} \ud t\right)^{1/q} \\
        + \left(\int_0^T\!\!\!\left(\int_\Omega\int_\Omega \frac{\abs{\Ud^{[r]}_x\! f(x,t)-\Ud^{[r]}_y\! f(y,t)}^p}{\abs{x-y}^{n+p(r-[r])}}\, \ud x\, \ud y\right)^{q/p}\ud t\right)^{1/q} \\
        + \sum_{0\leq s'\leq [s]} \!\!\!\left(\int_0^T\!\!\!\left(\int_\Omega \abs{\partial^{s'}_t\! f(x,t)}^p\, \ud x\right)^{q/p} \ud t\right)^{1/q} \\
        + \left(\int_0^T\!\!\!\int_0^T\!\!\!\frac{\left(\int_\Omega \abs{\partial^{[s]}_t\! f(x,t) - \partial^{[s]}_w\! f(x,w)}^p\, \ud x\right)^{q/p}}{\abs{t-w}^{1+q(s-[s])}}\, \ud t\, \ud w\right)^{1/q} < \infty.
    \end{multline*}

    We say that a function $\varphi$ belongs to the space $W_{p,q}^{r,s}(S^T)$, where $S := \partial \Omega$ is a compact manifold (provided $\Omega$ is bounded and open), if all functions
    \begin{equation*}
        (\varphi\beta_i)\circ \alpha_i^{-1}\colon\alpha_i(U_i)\to\mathbb{R}, \qquad i\in I
    \end{equation*}
     belong to $\overset{\circ}{W}\,\!^{r,s}_{p,q}(\alpha_i(U_i))$, which is understood as the closed hull of $\mathcal{D}(S)$ in $W^{r,s}_{p,q}(S^T)$, whereas $(U_i,\alpha_i)$ is an admissible $\mathcal{C}^r$-atlas for $S$ and $\beta_i$ is a subordinate partition of unity. In this case $S\cap U_i$ is given by the equation $x_n = f_i(x_1,\ldots,x_{n - 1})$ and the derivatives $D^{r'}$ and $D^{\floor{r}}$ are taken with respect to the variables $x_1,\ldots,x_{n - 1}$.

     For more details, see \cite[Ch. $1$, $\S 3$ and $\S 4$]{wlo}.
\end{defi}

\begin{lem}\label{l9}
    Suppose that $\psi \in \mathcal{C}^{r}(S)$ and $\mu \in W^{r,s}_{p,q}(S^T)$. Then $\psi\mu \in W^{r,s}_{p,q}(S^T)$ and there exists a constant $c$ such that
    \begin{equation*}
        \norm{\psi \mu}_{W^{r,s}_{p,q}(S^T)} \leq c \norm{\mu}_{W^{r,s}_{p,q}(S^T)}
    \end{equation*}
    which depends on $r$ and $\psi$.
\end{lem}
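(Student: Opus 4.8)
The plan is to reduce the assertion to a Euclidean multiplication estimate and to exploit that $\psi$ depends only on the spatial variable. By Definition \ref{don}, the norm $\norm{\psi\mu}_{W^{r,s}_{p,q}(S^T)}$ is computed through the local charts $(U_i,\alpha_i)$ and the subordinate partition $\beta_i$; since $(\psi\mu\beta_i)\circ\alpha_i^{-1}=(\psi\circ\alpha_i^{-1})\cdot\big((\mu\beta_i)\circ\alpha_i^{-1}\big)$ and $\psi\circ\alpha_i^{-1}$ is again of class $\mathcal{C}^r$ on $\alpha_i(U_i)\subseteq\mathbb{R}^{n-1}$ (the atlas being a $\mathcal{C}^r$-atlas), it suffices to prove the flat statement: multiplication by a fixed $\mathcal{C}^r$ function $\tilde\psi$ is bounded on $W^{r,s}_{p,q}(G\times(0,T))$ for a Euclidean patch $G$, with a constant controlled by $\norm{\tilde\psi}_{\mathcal{C}^r}$ and $r$. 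Summing the resulting estimates over the finite atlas then yields the claim, the constant additionally absorbing the fixed geometry of $S$ and the $\beta_i$.

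First I would dispose of the time-regularity contributions. Because $\tilde\psi=\tilde\psi(x)$ carries no time dependence, we have $\partial^{s'}_t(\tilde\psi\mu)=\tilde\psi\,\partial^{s'}_t\mu$ and $\partial^{[s]}_t(\tilde\psi\mu)(x,t)-\partial^{[s]}_w(\tilde\psi\mu)(x,w)=\tilde\psi(x)\big(\partial^{[s]}_t\mu(x,t)-\partial^{[s]}_w\mu(x,w)\big)$, so the two time terms in the norm are bounded by $\norm{\tilde\psi}_{L_\infty}$ times the corresponding terms for $\mu$. For the spatial $L_p$-terms of integer order $r'\le[r]$ I would apply the Leibniz rule $\Ud^{r'}_x(\tilde\psi\mu)=\sum_{k\le r'}\binom{r'}{k}\Ud^k_x\tilde\psi\,\Ud^{r'-k}_x\mu$ and bound each factor $\Ud^k_x\tilde\psi$ in $L_\infty$ (admissible since $k\le[r]$), so that every term is dominated by a spatial $L_p$-term of $\mu$ of order $\le[r]$. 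For integer $r$ this already finishes the argument.

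The only substantial step is the Gagliardo (fractional, order $r-[r]$) spatial seminorm of $\Ud^{[r]}_x(\tilde\psi\mu)$, relevant when $r$ is non-integer. Writing $\sigma:=r-[r]\in(0,1)$ and expanding $\Ud^{[r]}_x(\tilde\psi\mu)$ by Leibniz, each summand has the form $a\,b$ with $a=\Ud^k_x\tilde\psi$ and $b=\Ud^{[r]-k}_x\mu$, and I would use the pointwise identity $a(x)b(x)-a(y)b(y)=a(x)\big(b(x)-b(y)\big)+\big(a(x)-a(y)\big)b(y)$. The first piece contributes $\norm{a}_{L_\infty}$ times the fractional seminorm of $b=\Ud^{[r]-k}_x\mu$, which is controlled by $\norm{\mu}_{W^{r}_p}$ via the embedding $W^{r-k}_p\hookrightarrow W^{[r]-k+\sigma}_p$ on the bounded patch (for $k=0$ it is exactly the top-order seminorm of $\mu$). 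The delicate piece is $\big(a(x)-a(y)\big)b(y)$: here I would estimate, uniformly in $y$, the inner integral $\int \abs{a(x)-a(y)}^p\abs{x-y}^{-(n-1)-p\sigma}\,\ud x$ by splitting into $\abs{x-y}<1$ and $\abs{x-y}\ge1$; on the far region the bound $\abs{a(x)-a(y)}\le 2\norm{a}_{L_\infty}$ gives an integrable tail, while near the diagonal I would use the \emph{Lipschitz} bound $\abs{a(x)-a(y)}\le c\,\abs{x-y}$ to turn the singular exponent into $\abs{x-y}^{p(1-\sigma)-(n-1)}$, which is locally integrable because $p(1-\sigma)>0$. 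This uniform bound leaves only $\int\abs{b(y)}^p\,\ud y\le\norm{\mu}_{W^r_p}^p$. I expect this near-diagonal estimate to be the main obstacle, since it is precisely the step that fails if one uses only the Hölder modulus of $a$; it goes through because the derivatives of $\tilde\psi$ entering here are genuinely Lipschitz (for the top index $k=[r]$ this is where the full strength of the regularity of $\psi$—understood so that $\Ud^{[r]}_x\psi$ is Lipschitz—is used). Collecting the finitely many terms and integrating the resulting bounds against the outer $L_q$-norm in $t$ completes the estimate, with a constant depending only on $r$ and on $\norm{\psi}_{\mathcal{C}^r}$.
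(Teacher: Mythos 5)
The paper does not prove Lemma \ref{l9} at all: it simply refers to Wloka \cite{wlo}, Ch.~1, \S4, Proposition~4.5, so there is no in-paper argument to match yours against. What you have written is essentially a self-contained reconstruction of the standard multiplier proof that such a reference contains: reduction through the charts $(U_i,\alpha_i)$ and the partition $\beta_i$ to a flat patch, the trivial treatment of the two time terms (since $\psi$ is time-independent), Leibniz for the integer-order spatial terms, and the splitting $a(x)b(x)-a(y)b(y)=a(x)(b(x)-b(y))+(a(x)-a(y))b(y)$ for the Gagliardo seminorm. All of these steps are sound, and the constant you obtain depends on $\norm{\psi}_{\mathcal{C}^r}$, $r$ and the fixed atlas, consistent with the statement. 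The one point of substance, which you correctly isolate, is the near-diagonal estimate of $\int\abs{a(x)-a(y)}^p\abs{x-y}^{-(n-1)-p\sigma}\,\ud x$ for $a=\Ud^{[r]}_x\tilde\psi$: if $\mathcal{C}^r(S)$ for non-integer $r$ is read as the H\"older space $\mathcal{C}^{[r],\sigma}$ with $\sigma=r-[r]$, this integral is logarithmically divergent and the argument (and in fact the lemma in this sharp form) breaks down; one needs $\Ud^{[r]}_x\psi$ to have a modulus of continuity strictly better than $\sigma$, e.g.\ Lipschitz as you assume. Since in this paper the lemma is only ever applied to partition-of-unity functions $\psi^{\alpha}$, which are smooth, your reading is harmless here, but it is worth recording that the hypothesis ``$\psi\in\mathcal{C}^r(S)$'' should be understood accordingly (or strengthened to $\mathcal{C}^{[r]+1}$) for the proof to close. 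With that caveat your argument is complete and, unlike the paper's, actually exhibits the mechanism behind the estimate.
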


For proof of this Lemma, see \cite[Ch. $1$, $\S 4$, Proof of Proposition $4.5$; there are slight differences, but they are related to technical details]{wlo}.

\section{The existence of solutions --- Proof of Theorem \ref{t1}}\label{s3}

In this section we prove the existence of solutions to problem \eqref{p111}. The proof is based on the method of successive approximations. The beginning point is formula \eqref{eq1.3}, which we rewrite in the form

\begin{equation}\label{r21}
	\mu(\xi,t) - \int_0^t\!\!\!\int_S N(\xi,t;\eta,\tau)\mu(\eta,\tau)\, \ud S_\eta\, \ud \tau = g(\xi,t),
\end{equation}
where $g(\xi,t) = -2\varphi(\xi,t)$ and $N(\xi,t;\eta,\tau)$ is given by 
\begin{equation*}
	N(\xi,t;\eta,\tau) = 2\frac{\partial}{\partial n_{\eta}} \Gamma(\xi - \eta, t - \tau) = \frac{\abs{\xi - \eta}\cos(\xi - \eta,n_{\eta})}{t - \tau}\Gamma(\xi - \eta, t - \tau).
\end{equation*}
Therefore \eqref{r21} can be regarded as a product of a weakly singular kernel of Volterra type and of a weakly singular kernel. By $n_{\eta}$ we understand the unit outward normal vector in point $\eta \in S$ and $\frac{\partial}{\partial n_{\eta}}$ denotes the normal derivative with respect to variable $\eta$.

Next we solve equation \eqref{r21}. Since the kernel $N(\xi,t;\eta,\tau)$ is unbounded we first iterate this equation so many times that the obtained iterated equation possesses a bounded kernel. However, two natural questions may arise: 1. Do the solutions of the iterated and original equations coincide? 2. How do we know that the iterated equation have a bounded kernel? The answers to this questions are guaranteed by two lemmas that we state below:

\begin{lem}
	There exists an integer number $m_0$ such that for any integer $m > m_0$ every solution to the $m$-times iterated integral equation of the given weakly singular equation is the solution of the original equation.
\end{lem}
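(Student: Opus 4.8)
The plan is to write \eqref{r21} in operator form. Setting $(\mathcal{K}\mu)(\xi,t) = \int_0^t\!\!\int_S N(\xi,t;\eta,\tau)\mu(\eta,\tau)\,\ud S_\eta\,\ud\tau$, the equation reads $(\Id-\mathcal{K})\mu=g$. Applying $\Id+\mathcal{K}+\dots+\mathcal{K}^{m-1}$ to both sides and telescoping gives the $m$-times iterated equation $(\Id-\mathcal{K}^m)\mu=g_m$ with $g_m:=\sum_{j=0}^{m-1}\mathcal{K}^jg$, whose kernel is the $m$-fold convolution $N_m$ of $N$ with itself in the variables $(\eta,\tau)$. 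The algebraic backbone of the argument is the commuting factorization $\Id-\mathcal{K}^m=(\Id-\mathcal{K})\sum_{j=0}^{m-1}\mathcal{K}^j=\bigl(\sum_{j=0}^{m-1}\mathcal{K}^j\bigr)(\Id-\mathcal{K})$.

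Next I would take a solution $\mu$ of the iterated equation and introduce its defect $\omega:=(\Id-\mathcal{K})\mu-g$ in the original equation; the aim is to show $\omega=0$. Applying $\sum_{j=0}^{m-1}\mathcal{K}^j$ to the identity $(\Id-\mathcal{K})\mu=g+\omega$ and using $(\Id-\mathcal{K}^m)\mu=g_m$ yields $\sum_{j=0}^{m-1}\mathcal{K}^j\omega=0$. Multiplying this by $(\Id-\mathcal{K})$ and telescoping once more gives $(\Id-\mathcal{K}^m)\omega=0$, that is $\omega=\mathcal{K}^m\omega$, and by repeated substitution $\omega=\mathcal{K}^{jm}\omega$ for every positive integer $j$.

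The final step exploits the Volterra structure. Since $\Gamma(x,t)=0$ for $t<0$, the inner integral extends only over $\tau<t$, so $\mathcal{K}$ is of Volterra type in time; together with the weak singularity (here one uses $\abs{\xi-\eta}\,\abs{\cos(\xi-\eta,n_\eta)}\le c\abs{\xi-\eta}^2$ for a $\mathcal{C}^2$ boundary, which tempers the spatial singularity, and the Gaussian decay of $\Gamma$ to trade powers of $\abs{\xi-\eta}$ for powers of $t-\tau$) the iterated kernels $N_m$ gain a positive power of $t-\tau$ at each convolution while accumulating factorially small constants from the resulting Beta integrals in time. Consequently there is an $m_0$ beyond which $N_m$ is bounded (indeed continuous), and moreover $\norm{\mathcal{K}^{jm}}\to 0$ as $j\to\infty$ on the relevant space. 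Passing to the limit in $\omega=\mathcal{K}^{jm}\omega$ forces $\omega=0$, so $\mu$ solves \eqref{r21}, which is the claim.

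I expect the main obstacle to be the kernel estimate underpinning the third paragraph: bounding $N_m$ requires convolving the weakly singular factors while simultaneously controlling the spatial singularity on the compact manifold $S$ and the temporal singularity, and arranging the successive time integrations as Beta integrals so that their values decay factorially. This is the classical Ladyzhenskaya computation, and it is exactly this estimate that pins down the least admissible $m_0$ (the smallest number of iterations that renders $N_m$ bounded and the iterated equation a well-posed Volterra--Fredholm problem); the operator identities above are then purely formal.
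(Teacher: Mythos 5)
Your argument is correct, but note that the paper itself offers no proof of this lemma: it is quoted verbatim and deferred to Pogorzelski's book (Ch.\ 3, \S 3 of \cite{pog}), and what you have written is essentially the classical argument given there --- the telescoping factorization of $\Id-\mathcal{K}^m$, the reduction of the defect to $\omega=\mathcal{K}^{jm}\omega$, and the factorial decay of the iterated Volterra kernels forcing $\omega=0$. The only cosmetic mismatch is a sign convention: the paper's iteration carries $(-1)^l$ and $g_l=g-\mathcal{K}g_{l-1}$, corresponding to replacing your $\mathcal{K}$ by $-\mathcal{K}$, which does not affect the substance.
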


\begin{lem}
	If the singular kernel $K(x,t;y,\tau)$ has the form
	\begin{equation*}
		K(x,t;y,\tau) = \frac{k(x,t;y,\tau)}{(t - \tau)^\alpha},
	\end{equation*}
	where $k(x,t;y,\tau)$ is bounded and continuous function, then there always exists an integer number $m_0$ dependent on $\alpha$ such that for $m > m_0$ the iterated kernels $K_m(x,t;y,\tau)$ are bounded.
\end{lem}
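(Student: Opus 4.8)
The idea is that a weakly singular Volterra kernel becomes less singular after each iteration, the gain per step being the fixed positive amount $1-\alpha$, while the integration over the compact manifold $S$ is harmless because $k$ is bounded. Recall that the iterated kernels are defined by $K_1 := K$ and
\begin{equation*}
    K_{m+1}(x,t;y,\tau) = \int_\tau^t\!\!\!\int_S K(x,t;z,s)\,K_m(z,s;y,\tau)\,\ud S_z\,\ud s.
\end{equation*}
Setting $M := \sup\abs{k}$, the hypothesis yields $\abs{K(x,t;y,\tau)}\le M\,(t-\tau)^{-\alpha}$, and since $\abs{S}<\infty$ the only genuinely singular factor is the one in the time variable. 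Note also that $\alpha<1$ is forced here: otherwise even $K_2$ would fail to be integrable, and no iteration could remove the singularity.

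The plan is to prove by induction on $m$ the pointwise estimate
\begin{equation*}
    \abs{K_m(x,t;y,\tau)} \le C_m\,(t-\tau)^{m(1-\alpha)-1},
\end{equation*}
where the constants $C_m$ are given explicitly in terms of $M$, $\abs{S}$ and the Euler Beta function $B$. The case $m=1$ is exactly the hypothesis. For the inductive step I would substitute the bound for $K_m$ together with $\abs{K}\le M(t-s)^{-\alpha}$ into the recursion; after carrying out the harmless $\ud S_z$ integration (which contributes the factor $\abs{S}$) the remaining time integral is evaluated through the substitution $s=\tau+(t-\tau)\sigma$, $\sigma\in(0,1)$, giving
\begin{equation*}
    \int_\tau^t (t-s)^{-\alpha}(s-\tau)^{m(1-\alpha)-1}\,\ud s = (t-\tau)^{(m+1)(1-\alpha)-1}\,B\big(m(1-\alpha),\,1-\alpha\big).
\end{equation*}
This closes the induction with $C_{m+1}=M\abs{S}\,B(m(1-\alpha),1-\alpha)\,C_m$ and shows that the exponent of $(t-\tau)$ increases by precisely $1-\alpha$ at each iteration.

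It then suffices to read off $m_0$ from the exponent. As soon as $m(1-\alpha)-1\ge 0$, i.e. $m\ge (1-\alpha)^{-1}$, the factor $(t-\tau)^{m(1-\alpha)-1}$ is bounded on $\{0\le\tau\le t\le T\}$ by $\max\{1,T^{m(1-\alpha)-1}\}$, so $K_m$ is bounded. Hence one may take $m_0:=\lceil (1-\alpha)^{-1}\rceil-1$, so that every $m>m_0$ yields a bounded iterated kernel, as claimed. I expect the only delicate points to be the convergence of the Beta integral --- which needs both of its arguments $m(1-\alpha)$ and $1-\alpha$ to be positive, i.e. precisely the weak-singularity condition $\alpha<1$ --- and the bookkeeping of the constants $C_m$; the overall scheme is the classical regularisation of Abel-type kernels and should present no structural difficulty. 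For the kernel $N$ of the previous section the estimate $\abs{\cos(\xi-\eta,n_\eta)}\le c\abs{\xi-\eta}$ together with Lemma~\ref{l2} reduces $N$, after the spatial integration, to an admissible $\alpha=\frac{1}{2}$, so that already $m_0=1$ works there.
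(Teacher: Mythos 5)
Your proof is correct and is the classical Beta-function induction for weakly singular Volterra kernels: the paper does not prove this lemma itself but delegates it to \cite[Ch.~3, \S 2]{pog}, where essentially this argument appears, so you have reconstructed the intended proof (including the correct observation that $\alpha<1$ is implicitly required for the iterated kernels to exist at all). One caveat on your closing side remark: the claim that the kernel $N$ of Section~\ref{s3} reduces to an admissible $\alpha=\frac{1}{2}$ with $m_0=1$ is inconsistent with the paper, which needs $l\geq n+1$ iterations; the reason is that $\sup_{\xi,\eta}\abs{N_1}\sim(t-\tau)^{-n/2}$ (the Gaussian equals $1$ at $\xi=\eta$, so the estimate $\abs{\cos(\xi-\eta,n_\eta)}\leq c\abs{\xi-\eta}$ plus Lemma~\ref{l2} only buys a full power of $t-\tau$), and the gain of $\frac{1}{2}$ per iteration comes from the spatial integration $\int_S e^{-\abs{\xi-z}^2/(4(t-s))}\,\ud S_z\sim(t-s)^{(n-1)/2}$ \emph{inside} each iteration, not from a pointwise bound with bounded numerator as in the lemma's hypothesis. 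This does not affect the validity of your proof of the lemma as stated.
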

Both Lemmas are proved \cite[Ch. 3, \S 3]{pog} and \cite[Ch. 3, \S 2]{pog} respectively.

After the first iteration of equation \eqref{r21} we obtain
\begin{align*}
	&\mu(\xi,t) - \int_0^t\!\!\!\int_S N_2(\xi,t;\eta,\tau)\mu(\eta,\tau)\, \ud S_\eta\, \ud \tau = g_2(\xi,t), \\
	&g_2(\xi,t) = g(\xi,t) - \int_0^t\!\!\!\int_S N_1(\xi,t;\eta,\tau)g(\eta,\tau)\, \ud S_\eta\ud \tau
\end{align*}
and $N_1(\xi,t;\eta,\tau) = N(\xi,t;\eta,\tau)$,
\begin{equation*}
	N_2(\xi,t;\eta,\tau) = \int_0^t\!\!\!\int_S N_1(\xi,t;\alpha,s)N_{1}(\alpha,s;\eta,\tau)\, \ud S_\alpha\, \ud s.
\end{equation*}
After $l$ iterations we get
\begin{equation}\label{r23}
	\begin{aligned}
		&\mu(\xi,t) - (-1)^l\int_0^t\!\!\!\int_S N_l(\xi,t;\eta,\tau)\mu(\eta,\tau)\, \ud S_\eta\, \ud \tau = g_l(\xi,t), \\
		&g_l(\xi,t) = g(\xi,t) - \int_0^t\!\!\!\int_S N_1(\xi,t;\eta,\tau)g_{l - 1}(\eta,\tau)\, \ud S_\eta\ud \tau,
	\end{aligned}
\end{equation}
where
\begin{equation*}
	N_l(\xi,t;\eta,\tau) = \int_0^t\!\!\!\int_S N_1(\xi,t;\alpha,s)N_{l - 1}(\alpha,s;\eta,\tau)\, \ud S_\alpha\, \ud s
\end{equation*}
and
\begin{equation*}
	N_l(\xi,t;\eta,\tau) = \frac{e_l(\xi,t;\eta,\tau)}{(t - \tau)^{\frac{n - l + 1}{2}}}e^{-\frac{\abs{\xi - \eta}^2}{4(t - \tau)}},
\end{equation*}
where $e_l(\xi,t;\eta,\tau)$ is a bounded and continuous function for $t\geq \tau$.

Now we see that if $l \geq n + 1$ then the iterated equation has a bounded kernel. Therefore we can apply the method of successive approximations. We finally have
\begin{lem}\label{l10}
	Let us rewrite equation \eqref{r23} in the form
	\begin{equation}\label{row_calk}
		\mu(\xi,t) = g_l(\xi,t) + N_l\mu(\xi,t),
	\end{equation}
	where
	\begin{equation*}
		N_l\mu(\xi,t) = (-1)^l\int_0^t\!\!\!\int_S N_l(\xi,t;\eta,\tau)\mu(\eta,\tau)\, \ud S_\eta \ud \tau.
	\end{equation*}
	Assume that $g_l\in W_{p,q}^{r,s}(S^T)$. Then there exists a unique solution $\mu$ to the above equation such that $\mu \in W_{p,q}^{r,s}(S^T)$.
\end{lem}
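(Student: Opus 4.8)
The plan is to solve the fixed-point equation \eqref{row_calk} by the method of successive approximations, exploiting the Volterra structure of $N_l$ to force convergence in $W^{r,s}_{p,q}(S^T)$ irrespective of the size of the kernel. I would set $\mu_0 := g_l$ and $\mu_{k+1} := g_l + N_l\mu_k$, so that the candidate solution is the Neumann series
\begin{equation*}
	\mu = \sum_{k=0}^{\infty} N_l^k g_l,
\end{equation*}
where $N_l^k$ is the $k$-fold composition of the operator $N_l$. To give this series meaning in $W^{r,s}_{p,q}(S^T)$, the central task is to prove that $N_l$ maps this space boundedly into itself and that the norms of its iterates sum.

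The key single-step estimate I would establish is
\begin{equation*}
	\norm{N_l\mu}_{W^{r,s}_{p,q}(S^T)} \leq c\, \norm{\mu}_{W^{r,s}_{p,q}(S^T)}.
\end{equation*}
For the lower-order part of the norm this follows from the general Minkowski inequality (Lemma \ref{l3}) and the fact that, since $l \geq n+1$, the exponent $\tfrac{n-l+1}{2}$ in $N_l$ is nonpositive; hence the weakly singular factor has disappeared and the kernel is bounded. For the fractional spatial and temporal seminorms of Definition \ref{don} I would differentiate the kernel $N_l(\xi,t;\eta,\tau)$ directly in $\xi$ and in $t$ (it is smooth for $t>\tau$, being built by iteration from derivatives of the $C^{\infty}$ fundamental solution $\Gamma$) and apply Minkowski's inequality once more. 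The negative powers of $t-\tau$ produced by each differentiation are absorbed by the Gaussian factor $e^{-\abs{\xi-\eta}^2/4(t-\tau)}$ through Lemma \ref{l2}, which is precisely the reason that auxiliary estimate was recorded. On the curved boundary the passage to local charts and the attendant multiplication by the partition-of-unity functions are absorbed by Lemma \ref{l9}.

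Granted the single-step bound, I would close the argument through the Volterra (time-ordered) structure: because each application of $N_l$ integrates only over $\tau\in(0,t)$, the iterate $N_l^k$ carries an integration over the simplex $0\leq\tau_1\leq\cdots\leq\tau_k\leq t$, producing a gain of factorial type in $k$. Thus the operator norm of $N_l^k$ on $W^{r,s}_{p,q}(S^T)$ is bounded by $c^k T^{\gamma k}/k!$ for some $\gamma>0$, the Neumann series converges absolutely, and its sum $\mu\in W^{r,s}_{p,q}(S^T)$ solves \eqref{row_calk}. Uniqueness is immediate from the same decay: any solution of the homogeneous equation satisfies $\mu=N_l^k\mu$ for every $k$, whence $\mu=0$ on letting $k\to\infty$.

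The main obstacle is the single-step boundedness of $N_l$ in the full anisotropic norm, most delicately its temporal part. Differentiating $N_l\mu$ in $t$ touches both the integrand and the upper limit of the Volterra integral; rather than differentiate, it is cleaner to estimate the temporal Slobodecki seminorm directly through the differences $\partial^{[s]}_t N_l\mu(\cdot,t)-\partial^{[s]}_t N_l\mu(\cdot,w)$, and for $w<t$ to split each such difference into a contribution from the shifted upper limit, namely an integral over $\tau\in(w,t)$, and a contribution from the kernel difference $N_l(\cdot,t;\eta,\tau)-N_l(\cdot,w;\eta,\tau)$ over $\tau\in(0,w)$. Bounding both pieces by $\norm{\mu}_{W^{r,s}_{p,q}(S^T)}$, again via the Gaussian decay and Lemma \ref{l2}, is the crux of the proof.
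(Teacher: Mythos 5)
Your overall architecture coincides with the paper's: successive approximations $\mu_n = g_l + N_l\mu_{n-1}$, convergence of the Neumann series $\sum_k N_l^k g_l$ through the factorial gain $T^k/k!$ coming from the Volterra (time-ordered) structure, completeness of $W^{r,s}_{p,q}(S^T)$, and uniqueness by iterating the homogeneous equation $\psi = N_l^k\psi$ and letting $k\to\infty$. The paper does not isolate a single-step operator bound and then compose; it proves directly, by induction on $k$ and separately for each of the four pieces of the norm in Definition \ref{don} (Lemmas \ref{reg_osz}, \ref{niereg_osz} and Remark \ref{czw_wyr}), the estimate $\abs{\Phi_{m,n}}\,\abs{N_l}^k\abs{S}^k T^k/k!$ and its analogues for the Slobodecki seminorms, where $\abs{N_l}$ is the sup-norm of the \emph{undifferentiated} iterated kernel. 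That difference of bookkeeping is harmless; the substantive divergence is in how you propose to handle the derivatives.

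There is a genuine gap in your treatment of the higher-order and fractional parts of the norm: you propose to differentiate the kernel $N_l(\xi,t;\eta,\tau)$ in $\xi$ and $t$ and absorb the resulting negative powers of $t-\tau$ into the Gaussian via Lemma \ref{l2}. This fails for two reasons. First, Lemma \ref{l2} only trades a factor $\bigl(\abs{\xi-\eta}^2/(t-\tau)\bigr)^r$ for a constant; a bare factor $(t-\tau)^{-1}$, which is exactly what each $t$-derivative (and each pair of $\xi$-derivatives) of the Gaussian produces, cannot be absorbed this way without converting it into a spatial singularity $\abs{\xi-\eta}^{-2}$ that is not integrable on the $(n-1)$-dimensional surface $S$ for repeated differentiation; once the effective kernel has a singularity of order $\geq 1$ in $t-\tau$, both the single-step bound and the $1/k!$ gain from the simplex integration are lost. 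Second, the derivatives would also fall on the amplitude $e_l(\xi,t;\eta,\tau)$, which is only known to be bounded and continuous, so the required smoothness of the iterated kernel is not available. The missing idea — and the reason the hypothesis is $g_l\in W^{r,s}_{p,q}(S^T)$ — is to never differentiate the kernel at all: after the change of variables $\eta\mapsto\xi-\eta$, $\tau\mapsto t-\tau$ and the introduction of a partition of unity on $S$ (whose multiplier effect is controlled by Lemma \ref{l9}), the operator becomes a convolution and all derivatives $\Ud^m_\xi\partial^n_t$, as well as the spatial and temporal difference quotients, are transferred onto $g_l$ (respectively onto $N_l^{k-1}g_l$ in the induction step). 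The kernel then enters only through $\abs{N_l}$, the factorial survives, and the estimate closes. Your remark about splitting the temporal differences into an upper-limit contribution and a kernel-difference contribution is in the right spirit for Remark \ref{czw_wyr}, but it must likewise be implemented with the difference acting on $g_l$, not on $N_l$.
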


\begin{proof}
    Let $\mu_0 = 0$ be the first approximation. Then we get the following sequence for $\mu_n$:
    \begin{equation}\label{soa}
        \begin{aligned}
            \mu_1(\xi,t) &= g_l(\xi,t) + N_l\mu_0(\xi,t) = g_l(\xi,t), \\
            \mu_2(\xi,t) &= g_l(\xi,t) + N_l\mu_1(\xi,t) = g_l(\xi,t) + N_lg_l(\xi,t), \\
            &\vdots \\
            \mu_n(\xi,t) &= g_l(\xi,t) + N_l\mu_{n-1}(\xi,t) =\\
            &\mspace{40mu} g_l(\xi,t) + N_lg_l(\xi,t) + N^2_lg_l(\xi,t) + \ldots + N^{n-1}_lg_l(\xi,t), \\
            &\vdots
        \end{aligned}
    \end{equation}
    where the $N^k_l$ iteration of $N_l$ is given by the formula
    \begin{equation*}
        N^k_lg_l(\xi,t) = N_l(N^{k-1}_lg_l)(\xi,t) = (-1)^l\int_0^t\!\!\!\int_S N_l(\xi,t;\eta,\tau)N^{k-1}_lg_l(\eta,\tau)\, \ud S_\eta\, \ud \tau
    \end{equation*}
    for $k = 2,3,\ldots$ and
    \begin{equation*}
        N^1_lg_l(\xi,t) = N_lg_l(\xi,t) = (-1)^l\int_0^t\!\!\!\int_S N_l(\xi,t;\eta,\tau)g_l(\eta,\tau)\, \ud S_\eta\, \ud \tau.
    \end{equation*}

    We see that \eqref{soa} builds a sequence of partial sums of the Neumann series
    \begin{equation*}
        g_l(\xi,t) + N_lg_l(\xi,t) + N^2_lg_l(\xi,t) + N^3_lg_l(\xi,t) + \ldots + N^n_lg_l(\xi,t) + \ldots
    \end{equation*}
    We will show that the above series converges in the norm of the space $W_{p,q}^{r,s}(S^T)$ for $\xi \in S$ and $t\in(0,T)$, $T>0$, by checking the Cauchy condition. Since the norm of the space $W_{p,q}^{r,s}(S^T)$ consists of four different terms (see Definition \ref{don}), each term needs to be treated separately.

    Let $m > n$ and let us introduce the quantity $\abs{N_l}$ by the formula
    \begin{equation}\label{N}
        \abs{N_l} := \sup_{\substack{\xi,\eta \in S,\\ 0\leq t,\tau \leq T}} \abs{N_l(\xi,t;\eta,\tau)}.
    \end{equation}
    Considering the difference
    \begin{equation}\label{diff}
        \norm{\mu_m-\mu_n}_{W_{p,q}^{r,s}(S)}
    \end{equation}
    and using Lemma \ref{reg_osz} we can estimate the first term ($m=r'$, $n=0$) by
    \begin{equation*}
        \sum_{0\leq r'\leq \floor{r}} \!\!\!\left(\int_0^T\!\!\!\left(\int_S \abs{\sum_{k=n+1}^m\Ud^{r'}_\xi\! N^k_lg(\xi,t)}^p\ud S_\xi\right)^{q/p} \ud t\right)^{1/q} \leq \sum_{0\leq r'\leq \floor{r}} \sum_{k = n + 1}^m \abs{\Phi_{r'}} \abs{N_l}^k \abs{S}^k \frac{T^k}{k!}
    \end{equation*}
    and the third term ($m=0$, $n=s'$) by
    \begin{equation*}
        \sum_{0\leq s'\leq \floor{s}} \!\!\!\left(\int_0^T\!\!\!\left(\int_S \abs{\sum_{k = n + 1}^m\partial^{s'}_t\! N^k_lg(\xi,t)}^p\ud S_\xi\right)^{q/p} \ud t\right)^{1/q} \leq \sum_{0\leq s'\leq \floor{s}} \sum_{k = n + 1}^m \abs{\Phi_{s'}} \abs{N_l}^k \abs{S}^k \frac{T^k}{k!}.
    \end{equation*}
    Next we estime the second and the fourth term by applying Lemma \ref{niereg_osz} and Remark \ref{czw_wyr}. Finally the difference \eqref{diff} is estimated by
    \begin{multline*}
        \sum_{k=n+1}^m\Biggl(\sum_{0\leq r'\leq \floor{r}} \abs{\Phi_{r'}} \abs{N_l}^k \abs{S}^k \frac{T^k}{k!} + \sum_{0\leq s'\leq \floor{s}} \abs{\Phi_{s'}} \abs{N_l}^k \abs{S}^k \frac{T^k}{k!}\\
        + \abs{\Psi} \abs{N_l}^k \abs{S}^k \frac{T^k}{k!} + 2 \abs{\Theta} \abs{N_l}^k \abs{S}^k \frac{T^k}{k!} \Biggr)\\
        = \sum_{k=n+1}^m\Biggl(\sum_{0\leq r'\leq \floor{r}} \abs{\Phi_{r'}} + \sum_{0\leq s'\leq \floor{s}} \abs{\Phi_{s'}} + \abs{\Psi} + \abs{\Theta}\Biggr)\abs{N_l}^k \abs{S}^k \frac{T^k}{k!},
    \end{multline*}
    which can be an arbitrary small number if only $n,m$ are large enough. Therefore the Cauchy condition for the Neumann series is satisfied. The space $W^{r,s}_{p,q}(S^T)$ is complete, hence there exists a limit $\mu$. To end the proof we must only check that $\mu$ solves the equation \eqref{row_calk} and it is unique.

    Indeed, $\mu$ solves \eqref{row_calk}. Consider the equality which defines the successive approximations:
    \begin{equation*}
        \mu_n = g_l + N_l\mu_{n-1}
    \end{equation*}
    and let
    \begin{equation*}
        h = g_l + N_l\mu.
    \end{equation*}
    Substracting the second equation from the first and applying the norm of the space $W^{r,s}_{p,q}(S^T)$ yields
    \begin{equation*}
        \norm{\mu_n - h}_{W^{r,s}_{p,q}(S^T)} \leq \norm{N_l}_{L_\infty(S^T)} \norm{\mu_{n-1}-\mu}_{W^{r,s}_{p,q}(S^T)}.
    \end{equation*}
    Since $\lim_n \mu_n = \mu$ a.e. in the space $W^{r,s}_{p,q}(S^T)$, so $h = \mu$ a.e. and in fact $\mu$ solves the equation \eqref{row_calk}.

    To prove the uniqueness, suppose that $\bar \mu$ is another solution of the equation \eqref{row_calk}. Let $\psi = \mu - \bar\mu$. Then $\psi$ satisfies the following homogenous equation
    \begin{equation*}
        \psi = N_l\psi.
    \end{equation*}
    After $n - 1$ iterations we get
    \begin{equation*}
        \psi = N_l^n\psi.
    \end{equation*}
    Taking the norm of the space $W^{r,s}_{p,q}(S^T)$ and using the estimates from Lemmas \ref{reg_osz}, \ref{niereg_osz} and Remark \ref{czw_wyr} we get
    \begin{equation*}
        \norm{\psi}_{W^{r,s}_{p,q}(S^T)} \leq \norm{\psi}_{W^{r,s}_{p,q}(S^T)} \biggl(\sum_{0\leq r'\leq \floor{r}} \abs{\Phi_{r'}} + \sum_{0\leq s'\leq \floor{s}} \abs{\Phi_{s'}} + \abs{\Psi} + \abs{\Theta}\Biggr)\abs{N_l}^k \abs{S}^k \frac{T^k}{k!}.
    \end{equation*}
    Since $\lim_n \frac{a^n}{n!} = 0$ for $a\in\mathbb{R}$ the right-hand side tends to zero as $k\to\infty$. Hence $\norm{\psi}_{W^{r,s}_{p,q}(S^T)}$ must be zero and this implies that the solution $\mu$ to the equation \eqref{row_calk} is unique. This concludes the proof.
\end{proof}

Below we demonstrate various estimates we used in the above proof.

\begin{lem}\label{reg_osz}
    Let $\abs{N_l}$ be defined as in \eqref{N} and let
    \begin{equation*}
        \abs{\Phi_{m,n}} := \norm{\Ud^m_\eta\partial^n_\tau g_l}_{L_q(0,T;L_p(S))},
    \end{equation*}
    where $m,n \in \mathbb{N} = \{0,1,2,\ldots\}$. Then for any $k=1,2,\ldots$, $0 < \bar t \leq T$
    \begin{equation}\label{teza1}
         \left(\int_0^{\bar t}\!\!\!\left(\int_S \abs{\Ud^m_\xi \partial^n_t N^k_lg_l(\xi,t)}^p \ud S_\xi\right)^{q/p} \ud t\right)^{1/q} \leq \abs{\Phi_{m,n}} \abs{N_l}^k \abs{S}^k \frac{\bar t^k}{k!}.
    \end{equation}
\end{lem}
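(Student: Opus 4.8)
The plan is to prove \eqref{teza1} by induction on the iteration index $k$, after reducing the whole estimate to a one-step recursion in which the cost of a single application of $N_l$ is made explicit. The guiding principle is that, since $\abs{\Phi_{m,n}}$ measures $\Ud^m_\eta\partial^n_\tau g_l$ while $\abs{N_l}$ is the plain sup-norm of the kernel (carrying no derivatives), every derivative $\Ud^m_\xi$ and $\partial^n_t$ sitting on the outside of $N^k_lg_l$ must be transported all the way onto $g_l$; once this is done the kernel enters only through the bound $\abs{N_l}$ and the factor $\frac{\bar t^k}{k!}$ is produced purely by the nested (Volterra) time integrations.

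Set $A_k(t) := \bigl(\int_S \abs{\Ud^m_\xi\partial^n_t N^k_lg_l(\xi,t)}^p\,\ud S_\xi\bigr)^{1/p}$, so that the left-hand side of \eqref{teza1} is $\norm{A_k}_{L_q(0,\bar t)}$, and note that $A_0(\tau)=\bigl(\int_S\abs{\Ud^m_\eta\partial^n_\tau g_l}^p\,\ud S_\eta\bigr)^{1/p}$ satisfies $\norm{A_0}_{L_q(0,\bar t)}\le\abs{\Phi_{m,n}}$. First I would establish the one-step recursion
\begin{equation*}
    A_k(t) \le \abs{N_l}\,\abs{S}\int_0^t A_{k-1}(\tau)\,\ud\tau .
\end{equation*}
To get it, write $N^k_lg_l=N_l(N^{k-1}_lg_l)$ and commute the outer derivatives through the outer operator: since $N_l(\xi,t;\eta,\tau)$ depends on $\xi-\eta$ and on $t-\tau$, the spatial derivative is moved by $\Ud_\xi N_l=-\Ud_\eta N_l$ followed by integration by parts over the closed manifold $S$ (no boundary of $S$), and the time derivative by $\partial_tN_l=-\partial_\tau N_l$ followed by integration by parts in $\tau$, the endpoint terms vanishing thanks to the Volterra structure and the zero initial data carried by $g_l$ (hence by every $N^{j}_lg_l$). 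This turns the outer derivatives into $\Ud^m_\eta\partial^n_\tau$ acting on $N^{k-1}_lg_l$, i.e. into $A_{k-1}$. Bounding $\abs{N_l(\xi,t;\eta,\tau)}\le\abs{N_l}$, applying the general Minkowski inequality (Lemma \ref{l3}) to pull the $L_p(S)$-norm inside the $\ud S_\eta\,\ud\tau$ integral, and using Hölder in $\eta$ against $\ud S_\eta$ produces the two factors $\abs{S}^{1/p}\abs{S}^{1/p'}=\abs{S}$, which yields the recursion.

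Iterating the recursion and using monotonicity of the Volterra integration operator $V[f](t):=\int_0^t f(\tau)\,\ud\tau$ gives the pointwise bound $A_k\le(\abs{N_l}\,\abs{S})^k\,V^k[A_0]$. Since $V^k$ is convolution on $(0,\bar t)$ with the kernel $\frac{t^{k-1}}{(k-1)!}$, Young's inequality for convolutions gives
\begin{equation*}
    \norm{V^k[A_0]}_{L_q(0,\bar t)} \le \Bigl\|\tfrac{t^{k-1}}{(k-1)!}\Bigr\|_{L_1(0,\bar t)}\norm{A_0}_{L_q(0,\bar t)} = \frac{\bar t^{k}}{k!}\,\norm{A_0}_{L_q(0,\bar t)} \le \frac{\bar t^{k}}{k!}\,\abs{\Phi_{m,n}},
\end{equation*}
which is valid for every $q\in[1,\infty]$. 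Combining the last two displays reproduces exactly the bound $\abs{\Phi_{m,n}}\abs{N_l}^k\abs{S}^k\frac{\bar t^k}{k!}$ asserted in \eqref{teza1}.

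The routine parts are the Minkowski/Hölder bookkeeping and the convolution identity for $V^k$. The delicate step, and the one I expect to be the real obstacle, is the commutation of $\partial^n_t$ through the Volterra operator $N_l$: because $t$ sits simultaneously in the upper limit of integration and inside the kernel, naive differentiation produces both a kernel-derivative term and an endpoint term at $\tau=t$, and it is only after exploiting $\partial_tN_l=-\partial_\tau N_l$ and integrating by parts that the derivative transfers cleanly onto $N^{k-1}_lg_l$. Justifying the vanishing of the resulting $\tau=0$ endpoint terms (through the higher-order compatibility encoded in $g_l$ at $t=0$, together with the fact that every iterate $N^{j}_lg_l$ with $j\ge 1$ already vanishes there) and absorbing the chart-dependent corrections to the tangential integration by parts on $S$ are where the argument must be carried out with care.
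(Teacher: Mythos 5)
Your proof is correct and follows essentially the same route as the paper's: induction on $k$, transferring the derivatives $\Ud^m_\xi\partial^n_t$ onto $g_l$ through the convolution structure of the kernel (the paper does this via a partition of unity on $S$), bounding the kernel by $\abs{N_l}$, and extracting the factor $\frac{\bar t^k}{k!}$ from the nested Volterra integrations via the generalized Minkowski inequality. Your packaging of the induction as a one-step recursion for the $L_p(S)$-norm $A_k(t)$ followed by Young's inequality for the iterated Volterra kernel is only a cosmetic reorganization of the paper's argument, which instead carries the full $L_q(0,\bar t;L_p(S))$ norm through the inductive hypothesis with upper limit $\bar t-\tau$.
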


\begin{proof}
    Let $k=1$. Then
    \begin{multline*}
        \left(\int_0^{\bar t}\!\!\!\left(\int_S \abs{\Ud^m_\xi \partial^n_t N_lg_l(\xi,t)}^p \ud S_\xi\right)^{q/p} \ud t\right)^{1/q} \\
        =\Bigg(\int_0^{\bar t}\!\!\!\bigg(\int_S \Big\lvert\int_0^t\!\!\!\int_S \Ud^m_\xi \partial^n_t N_l(\xi-\eta,t-\tau)g_l(\eta,\tau)\, \ud S_\eta\, \ud \tau\Big\rvert^p \ud S_\xi\bigg)^{q/p} \ud t\Bigg)^{1/q}\\
        = \Bigg(\int_0^{\bar t}\!\!\!\bigg(\int_S \Big\lvert\sum_{\alpha}\int_0^t\!\!\!\int_{S^{\alpha}} N_l(\eta,\tau) \Ud^m_\xi \partial^n_t \big(\psi^{\alpha}(\xi - \eta)g_l(\xi - \eta,t - \tau)\big)\, \ud S_\eta\, \ud \tau\Big\rvert^p \ud S_\xi\bigg)^{q/p} \ud t\Bigg)^{1/q}\\
        \leq \norm{\Ud^m_\xi \partial^n_t \! g_l}_{L_q(0,T;L_p(S))} \sup_{\eta\in S,0\leq \tau \leq T} \abs{N_l(\eta,\tau)} \abs{S} \bar t \leq \abs{\Phi_{m,n}}\abs{N_l}\abs{S}\frac{\bar t^1}{1!}.
    \end{multline*}
    In the last equality we applied the partition of unity, $\sum_\alpha \psi^{\alpha}$ on $S$ in order to integrate by parts. In the last inequality we used the general Minkowski inequality (Lemma \ref{l3}) and Lemma \ref{l9}.

	Suppose now that for a given $k$ \eqref{teza1} holds. We will show that it is valid for $k+1$. We have
	\begin{multline*}
		\left(\int_0^{\bar t}\!\!\!\left(\int_S \abs{\Ud^m_\xi \partial^n_t N^{k+1}_lg(\xi,t)}^p \ud S_\xi\right)^{q/p} \ud t\right)^{1/q} \\
		= \Bigg(\int_0^{\bar t}\!\!\!\bigg(\int_S \Big\lvert\int_0^t\!\!\!\int_S \Ud^m_\xi \partial^n_t N_l(\xi-\eta,t-\tau)N^k_lg_l(\eta,\tau)\, \ud S_\eta\, \ud \tau\Big\rvert^p \ud S_\xi\bigg)^{q/p} \ud t\Bigg)^{1/q}
	\end{multline*}
	which, after introducing a partition of unity $\sum_\alpha \psi^{\alpha}$ on $S$, is equal to
	\begin{multline*}
		\Bigg(\int_0^{\bar t}\!\!\!\bigg(\int_S \Big\lvert\sum_{\alpha}\int_0^t\!\!\!\int_S N_l(\eta,\tau) \Ud^m_\xi \partial^n_t \big(\psi^\alpha(\eta)N^k_lg_l(\xi - \eta,t - \tau)\big)\, \ud S_\eta\, \ud \tau\Big\rvert^p \ud S_\xi\bigg)^{q/p} \ud t\Bigg)^{1/q}\\
		\leq c\int_0^{\bar t}\!\!\!\int_S N_l(\eta,\tau) \left(\int_0^{\bar t}\!\!\!\left(\int_S \abs{\Ud^m_\xi \partial^n_t N^k_lg_l(\xi - \eta,t - \tau)}^p \ud S_\xi\right)^{q/p} \ud t\right)^{1/q}\, \ud S_\eta\, \ud \tau\\
		\leq \abs{N_l} \int_0^{\bar t}\!\!\!\int_S \abs{\Phi_{m,n}} \abs{N_l}^k \abs{S}^k \frac{(\bar t - \tau)^k}{k!} \, \ud S_\eta\, \ud \tau = \abs{\Phi_{m,n}} \abs{N_l}^{k+1} \abs{S}^{k+1} \frac{\bar t^{k+1}}{(k+1)!}.
	\end{multline*}
	The first inequality above is due to the general Minkowski inequality and Lemma \ref{l9}. The last equality ends the proof.
\end{proof}

\begin{lem}\label{niereg_osz}
    Let $\abs{N_l}$ be defined as in \eqref{N} and let
    \begin{equation*}
        \abs{\Psi} := \left(\int_0^T\!\!\!\left(\int_S\int_S \frac{\abs{\Ud^{\floor{r}}_\xi\! g_l(\xi,t)-\Ud^{\floor{r}}_{\xi'}\! g_l(\xi',t)}^p}{\abs{\xi-\xi'}^{n+p(r-\floor{r})}}\, \ud S_\xi\, \ud S_{\xi'}\right)^{q/p}\ud t\right)^{1/q} + \abs{\Phi_{\floor{r},0}},
    \end{equation*}
    where $r\in\mathbb{R}^+$. Then for any $k=1,2,\ldots$ we have
    \begin{equation}\label{teza2}
        \left(\int_0^T\!\!\!\left(\int_S\int_S \frac{\abs{\Ud^{\floor{r}}_\xi\! N^k_lg_l(\xi,t)-\Ud^{\floor{r}}_{\xi'}\! N^k_lg_l(\xi',t)}^p}{\abs{\xi-\xi'}^{n+p(r-\floor{r})}}\, \ud S_\xi\, \ud S_{\xi'}\right)^{q/p}\ud t\right)^{1/q} \leq \abs{\Psi} \abs{N_l}^k \abs{S}^k \frac{t^k}{k!}.
    \end{equation}
\end{lem}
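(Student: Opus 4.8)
The plan is to argue by induction on $k$, following closely the scheme of the proof of Lemma \ref{reg_osz}, but now tracking the spatial Gagliardo-type seminorm (the second term of the norm in Definition \ref{don}) rather than the plain $L_{p,q}$-norm. The structural observation that makes the induction run is the following. After localizing with the partition of unity $\sum_\alpha\psi^\alpha$ on $S$ and performing the change of variables that turns the kernel $N_l(\xi-\eta,t-\tau)$ into a convolution weight, the derivative $\Ud^{\floor{r}}_\xi$ can be transferred onto the product $\psi^\alpha g_l$, exactly as in Lemma \ref{reg_osz}. The decisive point is that the translated arguments $\xi-\eta$ and $\xi'-\eta$ satisfy $|(\xi-\eta)-(\xi'-\eta)|=|\xi-\xi'|$, so that forming the finite difference in $\xi,\xi'$ reproduces the denominator $|\xi-\xi'|^{n+p(r-\floor{r})}$ of the seminorm untouched.

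For the base case $k=1$ I would start from
$$N_lg_l(\xi,t) = (-1)^l\int_0^t\!\!\!\int_S N_l(\xi-\eta,t-\tau)g_l(\eta,\tau)\,\ud S_\eta\,\ud\tau,$$
localize, and transfer the derivative as above. Taking the difference of $\Ud^{\floor{r}}_\xi(\cdot)$ at $\xi$ and $\xi'$, dividing by $|\xi-\xi'|^{(n+p(r-\floor{r}))/p}$, and invoking the general Minkowski inequality (Lemma \ref{l3}) to pull the integral $\int_0^t\!\int_S\,\ud S_\eta\,\ud\tau$ outside the seminorm, I would bound $|N_l(\eta,\tau)|$ by its supremum $|N_l|$ from \eqref{N}. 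What remains inside is the spatial Gagliardo seminorm of $\psi^\alpha g_l$; here a product estimate of the type in Lemma \ref{l9} applies, and because the Leibniz rule produces lower-order terms in which derivatives of $\psi^\alpha$ multiply $\Ud^{\floor{r}}g_l$, the outcome is controlled not only by the seminorm of $g_l$ but also by $\abs{\Phi_{\floor{r},0}}$. This is precisely why $\abs{\Psi}$ is defined as the sum of these two quantities. The trivial factors $\abs{S}$ and $\bar t$ from the measure and the time integration then give $\abs{\Psi}\,\abs{N_l}\,\abs{S}\,\bar t$, i.e.\ \eqref{teza2} for $k=1$ (with the variable upper limit $\bar t\le T$, as in Lemma \ref{reg_osz}).

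For the inductive step I would assume \eqref{teza2} for a given $k$, write $N^{k+1}_lg_l = N_l\bigl(N^k_lg_l\bigr)$, and repeat the same localization and derivative-transfer, now with $N^k_lg_l$ in the role of $g_l$. After Lemma \ref{l3} moves the kernel integral outside and the supremum bound $\abs{N_l}$ is extracted, the inner factor is exactly the seminorm estimated by the induction hypothesis with $\bar t$ replaced by $\bar t-\tau$. Integrating
$$\int_0^{\bar t}\!\!\!\int_S \abs{\Psi}\,\abs{N_l}^k\,\abs{S}^k\,\frac{(\bar t-\tau)^k}{k!}\,\ud S_\eta\,\ud\tau = \abs{\Psi}\,\abs{N_l}^k\,\abs{S}^{k+1}\,\frac{\bar t^{k+1}}{(k+1)!}$$
and collecting the extra factor $\abs{N_l}$ from the supremum bound yields the stated power and factorial for $k+1$, completing the induction.

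The step I expect to be the main obstacle is making the localization and change of variables rigorous on the curved manifold $S$: the translation $\xi\mapsto\xi-\eta$ that preserves $|\xi-\xi'|$ is literal only within a single chart, so one must work chart by chart and verify that the $\mathcal{C}^r$ transition maps do not spoil the homogeneity of the denominator $|\xi-\xi'|^{n+p(r-\floor{r})}$, absorbing the induced distortion into the constant furnished by Lemma \ref{l9}. The second delicate point is the bookkeeping of the Leibniz expansion after transferring $\Ud^{\floor{r}}_\xi$ onto $\psi^\alpha g_l$: one has to check that only the top-order seminorm together with $\abs{\Phi_{\floor{r},0}}$ appears, and that no genuinely higher-order quantity is generated, so that the final bound stays consistent with the definition of $\abs{\Psi}$.
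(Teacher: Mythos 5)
Your proposal follows essentially the same route as the paper's proof: induction on $k$, localization by a partition of unity on $S$, transfer of $\Ud^{\floor{r}}$ onto $\psi^\alpha g_l$ via the convolution structure (which preserves the increment $\abs{\xi-\xi'}$ in the Gagliardo denominator), then the general Minkowski inequality, the supremum bound $\abs{N_l}$, and Lemma \ref{l9}, with the time integration of $(\bar t-\tau)^k/k!$ producing the factorial gain. Your explicit accounting for why $\abs{\Psi}$ must include the lower-order term $\abs{\Phi_{\floor{r},0}}$ (Leibniz terms hitting $\psi^\alpha$) is a point the paper leaves implicit, but the argument is the same.
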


\begin{proof}
	Let $k=1$. Then
    \begin{multline*}
        \left(\int_0^T\!\!\!\left(\int_S\int_S \frac{\abs{\Ud^{\floor{r}}_\xi\! N_lg_l(\xi,t)-\Ud^{\floor{r}}_{\xi'}\! N_lg_l(\xi',t)}^p}{\abs{\xi-\xi'}^{n+p(r-\floor{r})}}\, \ud S_\xi\, \ud S_{\xi'}\right)^{q/p}\ud t\right)^{1/q}\\
    = \Biggl(\int_0^T\!\!\!\biggl(\int_S\int_S \biggl\lvert\int_0^t\!\!\!\int_S \Ud^{\floor{r}}_\xi\! N_l(\xi-\eta,t-\tau)g_l(\eta,\tau)\, \ud S_\eta\, \ud \tau\\
    - \int_0^t\!\!\!\int_S\Ud^{\floor{r}}_{\xi'}\! N_l(\xi'-\eta,t-\tau)g_l(\eta,\tau)\, \ud S_\eta\, \ud \tau\biggr\rvert^p \frac{1}{\abs{\xi-\xi'}^{n+p(r-\floor{r})}}\, \ud S_\xi\, \ud S_{\xi'}\biggr)^{q/p}\ud t\Biggr)^{1/q}.
    \end{multline*}
	Next we introduce a partition of unity $\sum_{\alpha}\psi^{\alpha}$ on $S$
    \begin{multline*}
    		\Biggl(\int_0^T\!\!\!\biggl(\int_S\int_S \Big\lvert\sum_\alpha\int_0^t\!\!\!\int_S N_l(\xi-\eta,t-\tau)\Ud^{\floor{r}}_\eta\big(\psi^{\alpha}(\eta)g_l(\eta,\tau)\big)\, \ud S_\eta\, \ud \tau \\
    		- \sum_{\alpha}\int_0^t\!\!\!\int_SN_l(\xi'-\eta,t-\tau)\Ud^{\floor{r}}_{\eta}\big(\psi^{\alpha}(\eta)g_l(\eta,\tau)\big)\, \ud S_\eta\, \ud \tau\Big\rvert^p \\
    		\cdot \frac{1}{\abs{\xi-\xi'}^{n+p(r-\floor{r})}}\, \ud S_\xi\, \ud S_{\xi'}\biggr)^{q/p}\ud t\Biggr)^{1/q}
	\end{multline*}
	and integrate by parts
	\begin{multline*}
		\Biggl(\int_0^T\!\!\!\biggl(\int_S\int_S \Big\lvert\sum_\alpha\int_0^t\!\!\!\int_S N_l(\eta,\tau)\Ud^{\floor{r}}_\xi\big(\psi^{\alpha}(\xi - \eta)g_l(\xi - \eta,t - \tau)\big)\, \ud S_\eta\, \ud \tau \\
        - \sum_{\alpha}\int_0^t\!\!\!\int_SN_l(\eta,\tau)\Ud^{\floor{r}}_{\xi'}\big(\psi^{\alpha}(\xi' - \eta)g_l(\xi' - \eta,t - \tau)\big)\, \ud S_\eta\, \ud \tau\Big\rvert^p \\
        \cdot \frac{1}{\abs{\xi-\xi'}^{n+p(r-\floor{r})}}\, \ud S_\xi\, \ud S_{\xi'}\biggr)^{q/p}\ud t\Biggr)^{1/q}.
    \end{multline*}
    Applying now the general Minkowski inequality and Lemma \ref{l9} yields
    \begin{equation*}
        \left(\int_0^T\!\!\!\left(\int_S\int_S \frac{\abs{\Ud^{\floor{r}}_\xi\! N_lg_l(\xi,t) - \Ud^{\floor{r}}_{\xi'}\! N_lg_l(\xi',t)}^p}{\abs{\xi-\xi'}^{n+p(r-\floor{r})}}\, \ud S_\xi\, \ud S_{\xi'}\right)^{q/p}\ud t\right)^{1/q} \leq \abs{N_l} \abs{\Psi} \abs{S} t.
    \end{equation*}

    Let us now assume that for $k$ given \eqref{teza2} holds. We will show that it is true also for $k+1$. We see
    \begin{multline*}
    		\left(\int_0^T\!\!\!\left(\int_S\int_S \frac{\abs{\Ud^{\floor{r}}_\xi\! N^{k+1}_lg_l(\xi,t)-\Ud^{\floor{r}}_{\xi'}\! N^{k+1}_lg_l(\xi',t)}^p}{\abs{\xi-\xi'}^{n+p(r-\floor{r})}}\, \ud S_\xi\, \ud S_{\xi'}\right)^{q/p}\ud t\right)^{1/q}\\
    		= \Biggl(\int_0^T\!\!\!\biggl(\int_S\int_S \biggl\lvert\int_0^t\!\!\!\int_S \Ud^{\floor{r}}_\xi\! N_l(\xi-\eta,t-\tau)N^k_lg_l(\eta,\tau)\, \ud S_\eta\, \ud \tau - \int_0^t\!\!\!\int_S\Ud^{\floor{r}}_{\xi'}\! N_l(\xi'-\eta,t-\tau)N^k_lg_l(\eta,\tau)\, \ud S_\eta\, \ud \tau\biggr\rvert^p\\
    		\cdot \frac{1}{\abs{\xi-\xi'}^{n+p(r-\floor{r})}}\, \ud S_\xi\, \ud S_{\xi'}\biggr)^{q/p}\ud t\Biggr)^{1/q}.
    \end{multline*}

    Applying a partition of unity $\sum_\alpha \psi^\alpha$, integrating by parts and using the general Minkowski inequality and Lemma \ref{l9}, as we did in case $k = 1$, we conclude the proof.
\end{proof}

\begin{rem}\label{czw_wyr}
	Let $\abs{N_l}$ be defined as in \eqref{N} and let
	\begin{equation*}
		\abs{\Theta} := \left(\int_0^T\!\!\!\int_0^T\!\!\!\frac{\left(\int_S \abs{\partial^{\floor{s}}_t\! g_l(\xi,t) - \partial^{\floor{s}}_w\! g_l(\xi,w)}^p\, \ud x\right)^{q/p}}{\abs{t - w}^{1+q(s-\floor{s})}}\, \ud t\, \ud w\right)^{1/q} + \abs{\Phi_{0,\floor{s}}}.
	\end{equation*}
	Then repeating the proof of the last Lemma we get
	\begin{equation}\label{teza3}
		\left(\int_0^T\!\!\!\int_0^T\!\!\!\left(\int_S \frac{\abs{\Ud^{\floor{s}}_t\! N^k_lg_l(\xi,t)-\Ud^{\floor{s}}_w\! N^k_lg_l(\xi,w)}^p}{\abs{t-w}^{1+q(s-\floor{s})}}\, \ud S_\xi\right)^{q/p}\ud t\, \ud w\right)^{1/q} \leq \abs{\Theta} \abs{N_l}^k \abs{S}^k \frac{T^k}{k!}.
	\end{equation}
\end{rem}

It remains to check that $\norm{g_l}_{W^{r,s}_{p,q}(S^T)} \leq c \norm{\varphi}_{W^{r,s}_{p,q}(S^T)}$. This can be easily seen if we consider the second equation in \eqref{r23}, apply any partition of unity on $S$ and repeat the calculations from this section for each of four terms of the norm $W^{r,s}_{p,q}(S^T)$.

\section{Estimates in the half space}\label{s4} 

In this section we estimate the solution $u$ to the problem \eqref{p1} in the half space. 

\begin{proof}[Proof of Theorem \ref{t2}]
    The proof is divided into four lemmas. Point \eqref{i1} is proved in Lemma \ref{l7} for $\alpha = 0$ and in Lemma \ref{l14} for $0 < \alpha < \frac{1}{p}$. Point \eqref{i2} is proved in Lemma \ref{l6} and point \eqref{i3} in Lemma \ref{l15}.
\end{proof}

\begin{lem}\label{l7}
    Let us assume that $\varphi \in L_q(0,T;L_p(\mathbb{R}^{n - 1}))$, where $1 \leq p < \infty$, $1 \leq q \leq \infty$. If $u$ is a solution to problem \eqref{p1}, then $u\in L_q(0,T;L_p(\mathbb{R}^{n}_+))$ and
    \begin{equation*}
        \norm{u}_{L_q(0,T;L_p(\mathbb{R}^n_+))} \leq c(p,T) \norm{\varphi}_{L_q(0,T;L_p(\mathbb{R}^{n - 1}))},
    \end{equation*}
    where the constant $c$ depends on $p$ and $T$.
\end{lem}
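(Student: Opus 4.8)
The plan is to work directly from the explicit representation \eqref{r2} and estimate the $L_p$-norm in the spatial variables pointwise in $t$, then handle the time integral. I would start by writing the solution in the convolution-in-space form
\begin{equation*}
    u(x',x_n,t) = \int_0^t\!\!\!\int_{\mathbb{R}^{n-1}} K(x'-y',x_n,t-\tau)\,\varphi(y',\tau)\,\ud y'\,\ud\tau,
\end{equation*}
where $K(z',x_n,\sigma) = \frac{1}{(4\pi)^{n/2}}\,\frac{x_n}{\sigma^{(n+2)/2}}\,e^{-(|z'|^2+x_n^2)/(4\sigma)}$ is the kernel appearing in \eqref{r2}. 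The key structural fact is that for fixed $x_n$ and $\sigma=t-\tau$ the kernel is, up to the factor $e^{-x_n^2/(4\sigma)}$, a dilated Gaussian in the tangential variable $z'$. The first step is therefore to apply Young's inequality in $\mathbb{R}^{n-1}$ (or equivalently the general Minkowski inequality, Lemma \ref{l3}) to the inner tangential convolution, which gives for a.e. $\tau$
\begin{equation*}
    \norm{K(\cdot,x_n,t-\tau)*\varphi(\cdot,\tau)}_{L_p(\mathbb{R}^{n-1})}
      \leq \norm{K(\cdot,x_n,t-\tau)}_{L_1(\mathbb{R}^{n-1})}\,\norm{\varphi(\cdot,\tau)}_{L_p(\mathbb{R}^{n-1})}.
\end{equation*}

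The second step is to compute the $L_1$-norm of the kernel in the tangential variable explicitly. The Gaussian integral $\int_{\mathbb{R}^{n-1}} e^{-|z'|^2/(4\sigma)}\,\ud z' = (4\pi\sigma)^{(n-1)/2}$ collapses the exponent, leaving a one-dimensional profile in $x_n$,
\begin{equation*}
    \norm{K(\cdot,x_n,\sigma)}_{L_1(\mathbb{R}^{n-1})}
      = \frac{1}{(4\pi\sigma)^{1/2}}\,\frac{x_n}{\sigma}\,e^{-x_n^2/(4\sigma)}
      =: h(x_n,\sigma).
\end{equation*}
The next step is to integrate the resulting bound over the normal variable $x_n\in(0,\infty)$. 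Writing $s = x_n^2/(4\sigma)$ one substitutes to see that $\int_0^\infty h(x_n,\sigma)^{?}\,\ud x_n$ is a pure power of $\sigma$ times a finite constant; in particular, because the $L_p(\mathbb{R}^n_+)$-norm requires the $p$-th power, I would raise the tangential estimate to the power $p$, integrate in $x_n$, and track how the $\sigma$-dependence comes out. The point I expect to be delicate is exactly this bookkeeping: one must convince oneself that after integrating in $x_n$ the remaining kernel in the time convolution is integrable near $\tau=t$, so that a Minkowski/Young argument in time produces only a factor $c(p,T)$ and not a blow-up.

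Having reduced matters to a scalar convolution in time, the final step is to apply the general Minkowski inequality (Lemma \ref{l3}) once more, now in the time variable, to the convolution $t\mapsto\int_0^t m(t-\tau)\norm{\varphi(\cdot,\tau)}_{L_p}\,\ud\tau$, where $m$ is the scalar time-kernel extracted above. This yields $\norm{u}_{L_q(0,T;L_p(\mathbb{R}^n_+))}\leq \norm{m}_{L_1(0,T)}\,\norm{\varphi}_{L_q(0,T;L_p(\mathbb{R}^{n-1}))}$, and the constant $\norm{m}_{L_1(0,T)}$ depends only on $p$ and $T$, giving exactly the claimed estimate. Throughout, Lemma \ref{l2} is the natural tool for bounding the suprema of expressions of the form $s^r e^{-s}$ that arise when one estimates the normal-variable factor, and I would use it to keep the $x_n$-integrals uniformly controlled. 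The main obstacle, as noted, is the careful tracking of the powers of $\sigma=t-\tau$ through the two spatial integrations so that the induced time kernel $m$ is genuinely $L_1(0,T)$; this is where the specific exponent $\alpha=0$ (as opposed to the fractional case $0<\alpha<\frac1p$ treated in Lemma \ref{l14}) keeps the singularity at $\tau=t$ integrable.
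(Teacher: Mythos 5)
Your plan is correct and follows essentially the same route as the paper: Minkowski/Young in the tangential variable to reduce to the $L_1$-norm of the Gaussian in $y'$, then Minkowski in $x_n$ to extract the factor $\bigl(\int_0^\infty x_n^p e^{-px_n^2/(4\sigma)}\,\ud x_n\bigr)^{1/p}\sim \sigma^{\frac12+\frac1{2p}}$, leaving the time kernel $m(\sigma)=c(p)\,\sigma^{-1+\frac1{2p}}$, which is integrable on $(0,T)$ precisely because $p<\infty$, and a final Minkowski inequality in time. The bookkeeping you flag as delicate works out exactly as you anticipate, matching the paper's computation.
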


\begin{proof}
    From \eqref{r2} it follows that
    \begin{equation}\label{r20}
        \abs{u(x,t)} \leq \frac{1}{(4\pi)^{\frac{n}{2}}} \int_0^t\!\!\!\int_{\mathbb{R}^{n - 1}} \frac{x_n}{\tau^{\frac{n + 2}{2}}} e^{-\frac{\abs{y'}^2 + x_n^2}{4\tau)}}\abs{\varphi(x' - y',t - \tau)}\, \ud y'\, \ud \tau.
    \end{equation}
    Taking the $L_q\big(0,T;L_p(\mathbb{R}^n_+)\big)$-norm yields
    \begin{multline*}
        \norm{u}_{L_q(0,T;L_p(\mathbb{R}^n_+))} \leq \frac{1}{(4\pi)^{\frac{n}{2}}} \bigg(\int_0^T\bigg(\int_{\mathbb{R}^n_+}\bigg\lvert\int_0^t\!\!\!\int_{\mathbb{R}^{n - 1}} \frac{x_n}{\tau^{\frac{n + 2}{2}}} e^{-\frac{\abs{y'}^2 + x_n^2}{4\tau}} \\
        \cdot \abs{\varphi(x' - y',t - \tau)}\, \ud y'\, \ud \tau\bigg\rvert^p\, \ud x\bigg)^{\frac{q}{p}}\, \ud t\bigg)^{\frac{1}{q}} \\
        \leq \frac{1}{(4\pi)^{\frac{n}{2}}} \bigg(\int_{\mathbb{R}_+}\bigg\lvert\int_0^T\!\!\!\int_{\mathbb{R}^{n - 1}} \frac{x_n}{\tau^{\frac{n + 2}{2}}} e^{-\frac{\abs{y'}^2 + x_n^2}{4\tau}}\, \ud y'\, \ud \tau \bigg\rvert^p\, \ud x_n\bigg)^{\frac{1}{p}} \norm{\varphi}_{L_q(0,T;L_p(\mathbb{R}^{n - 1}))},
    \end{multline*}
    where in the last inequality we used the general Minkowski inequality (Lemma \ref{l3}) with respect to $x'$. The integral with respect to $y'$ is equal to $(4\pi)^{\frac{n - 1}{2}} \tau^{\frac{n - 1}{2}}$, so using the general Minkowski inequality with respect to $x_n$ gives
    \begin{multline*}
        \norm{u}_{L_q(0,T;L_p(\mathbb{R}^n_+))} \leq \frac{1}{\sqrt{4\pi}} \int_0^T\frac{1}{\tau^{\frac{3}{2}}}\bigg(\int_0^\infty x_n^p e^{-\frac{p x_n^2}{4\tau}}\, \ud x_n\bigg)^{\frac{1}{p}}\, \ud \tau \norm{\varphi}_{L_q(0,T;L_p(\mathbb{R}^{n - 1}))} \\
        = \frac{(\Gamma(\frac{1 + p}{2}))^{\frac{1}{p}}}{\sqrt{\pi}p^{\frac{1}{2} + \frac{1}{2p}}} \int_0^T\frac{\tau^{\frac{1}{2p} + \frac{1}{2}}}{\tau^{\frac{3}{2}}} \, \ud \tau \norm{\varphi}_{L_q(0,T;L_p(\mathbb{R}^{n - 1}))} \leq c(p,T) \norm{\varphi}_{L_q(0,T;L_p(\mathbb{R}^{n - 1}))}
    \end{multline*}
    for any $1 \leq p < \infty$. This concludes the proof.
\end{proof}

\begin{lem}\label{l14}
    Suppose that $\varphi \in L_q(0,T;L_p(\mathbb{R}^{n - 1}))$, where $1 \leq p < \infty$, $1 \leq q \leq \infty$. If $u$ is a solution to problem \eqref{p1}, then $u\in L_q(0,T;W^\alpha_p(\mathbb{R}^{n}_+))$ and
    \begin{equation*}
        \norm{u}_{L_q(0,T;W^\alpha_p(\mathbb{R}^n_+))} \leq c(p,T) \norm{\varphi}_{L_q(0,T;L_p(\mathbb{R}^{n - 1}))},
    \end{equation*}
    where the constant $c$ depends on $p$ and $T$ and $\alpha$ is such that $0 < \alpha < \frac{1}{p}$.
\end{lem}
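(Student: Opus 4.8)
The plan is to treat the two ingredients of the $W^\alpha_p(\mathbb{R}^n_+)$-norm separately. The $L_p$-part (the case $\alpha=0$) is exactly Lemma \ref{l7}, so only the Gagliardo seminorm
\[
[u(\cdot,t)]_{\alpha,p}^p := \int_{\mathbb{R}^n_+}\!\!\int_{\mathbb{R}^n_+}\frac{\abs{u(x,t)-u(z,t)}^p}{\abs{x-z}^{n+\alpha p}}\,\ud x\,\ud z
\]
must be controlled. Since $\mathbb{R}^n_+$ is convex, I would first replace this double integral by the equivalent family of axis-parallel difference quotients, splitting the increment $x-z$ into its tangential part (directions $e_1,\dots,e_{n-1}$) and its normal part ($e_n$); this standard reduction (telescoping along segments that stay in $\mathbb{R}^n_+$) turns the task into estimating $\norm{u(\cdot+he_j,t)-u(\cdot,t)}_{L_p(\mathbb{R}^n_+)}$ for $h>0$, weighted by $h^{-1-\alpha p}\,\ud h$.

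The computational backbone is to rewrite \eqref{r2} by carrying out the $y'$-convolution against the $(n-1)$-dimensional Gaussian. Writing $G_\tau$ for the $(n-1)$-dimensional heat kernel and $P(x_n,\tau):=\tau^{-3/2}x_n e^{-x_n^2/(4\tau)}$, formula \eqref{r2} becomes
\[
u(x',x_n,t)=c\int_0^t P(x_n,\tau)\,\big(G_\tau * \varphi(\cdot,t-\tau)\big)(x')\,\ud\tau,
\]
which separates the normal weight $P$ from the tangential convolution. For a tangential shift I would move the difference onto $G_\tau$ and use $\norm{\Delta_h G_\tau}_{L_1(\mathbb{R}^{n-1})}\le c\min(1,h\tau^{-1/2})$ together with Young's inequality in $x'$; for a normal shift I would difference $P$ directly in $x_n$. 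In both cases, taking $L_p$ first in $x'$ and then in $x_n$ and invoking the general Minkowski inequality (Lemma \ref{l3}) in the $\tau$-integral yields a bound of the common form
\[
\norm{\Delta_h u(\cdot,t)}_{L_p(\mathbb{R}^n_+)} \le c\int_0^t A(h,\tau)\,\norm{\varphi(\cdot,t-\tau)}_{L_p(\mathbb{R}^{n-1})}\,\ud\tau,\qquad A(h,\tau)=\tau^{-1+\frac{1}{2p}}B\big(h\tau^{-1/2}\big),
\]
where $B(\rho)\le c\min(1,\rho)$: the tangential $B$ comes from $\norm{\Delta_h G_\tau}_{L_1}$ and the normal one from the $L_p$-modulus of continuity of the self-similar profile $\sigma e^{-\sigma^2/4}$ of $P$, which obeys the same bound; the prefactor $\tau^{-1+\frac1{2p}}$ is $\big(\int_0^\infty P(x_n,\tau)^p\,\ud x_n\big)^{1/p}$ and is consistent with Lemma \ref{l7}.

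The key point is the \emph{order} of the remaining two integrations. At fixed $t$ I would first apply Minkowski (Lemma \ref{l3}) in the measure $h^{-1-\alpha p}\,\ud h$ — legitimate since the outer exponent $p\ge1$ — to convert the seminorm into a pure time-convolution
\[
[u(\cdot,t)]_{\alpha,p}\le c\int_0^t K(\tau)\,\norm{\varphi(\cdot,t-\tau)}_{L_p(\mathbb{R}^{n-1})}\,\ud\tau,\qquad K(\tau)^p=\int_0^\infty A(h,\tau)^p\,h^{-1-\alpha p}\,\ud h .
\]
A self-similar substitution $h=\sqrt\tau\,\rho$ gives $K(\tau)=c\,\tau^{-1+\frac{1}{2p}-\frac{\alpha}{2}}$ with $c^p=\int_0^\infty B(\rho)^p\rho^{-1-\alpha p}\,\ud\rho$, the latter being finite for every $0<\alpha<1$ (convergence at $\rho=0$ uses $B(\rho)\le c\rho$ and $\alpha<1$, at $\rho=\infty$ uses $B\le c$ and $\alpha>0$). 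Only now do I take the $L_q(0,T)$-norm in $t$: the right-hand side is a convolution with $K$, so Young's inequality in time gives $\norm{[u(\cdot,\cdot)]_{\alpha,p}}_{L_q(0,T)}\le c\norm{K}_{L_1(0,T)}\norm{\varphi}_{L_q(0,T;L_p(\mathbb{R}^{n-1}))}$, and $\norm{K}_{L_1(0,T)}=c\int_0^T\tau^{-1+\frac{1}{2p}-\frac{\alpha}{2}}\,\ud\tau$ is finite precisely because $\frac{1}{2p}-\frac{\alpha}{2}>0$, i.e. $\alpha<\frac1p$. Performing the $h$-integration before the $t$-integration is exactly what makes the argument work uniformly for all $q\in[1,\infty]$, including $q<p$, where a naive Minkowski in the time variable would fail. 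I expect the main obstacle to be the normal-difference kernel estimate $\big(\int_0^\infty\abs{P(x_n+h,\tau)-P(x_n,\tau)}^p\,\ud x_n\big)^{1/p}\le c\,\tau^{-1+\frac1{2p}}\min(1,h\tau^{-1/2})$ with the correct powers of $\tau$ — this is where the boundary behaviour of $u$ lives and where the threshold $\alpha<\frac1p$ is born — together with the exponent bookkeeping that renders the $\tau$-singularity of $K$ at $\tau=0$ integrable up to, but not including, $\alpha=\frac1p$; the tangential estimates are comparatively routine once the convolution form above is established.
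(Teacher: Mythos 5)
Your proposal is correct and follows essentially the same route as the paper: the increment is split into a normal part and a tangential part, a self-similar scaling in $\tau$ produces the kernel $\tau^{-1+\frac{1}{2p}-\frac{\alpha}{2}}$ in both cases, the integration over the spatial increment is carried out before the time integration, and the restriction $\alpha<\frac{1}{p}$ arises exactly where you predict, from the integrability of that kernel at $\tau=0$ before applying Young/Minkowski in $t$. The only divergences are cosmetic: the paper estimates the tangential difference by a H\"older splitting of the Gaussian difference combined with the Mean Value Theorem rather than via $\norm{\Delta_h G_\tau}_{L_1}\leq c\min(1,h\tau^{-1/2})$ and Young's inequality, and it works directly with the double-integral seminorm instead of passing to directional difference quotients.
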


\begin{proof}
    We write
    \begin{equation*}
        u(x,t) - u(z,t) = u(x',x_n,t) - u(x',z_n,t) + u(x',z_n,t) - u(z',z_n,t) =:I_1 + I_2.
    \end{equation*}
    Let us first consider $I_1$. From Lemma \ref{l5} it follows that
    \begin{multline*}
        \frac{u(x',x_n,t) - u(x',z_n,t)}{\abs{x - z}^{\frac{n}{p} + \alpha}} \\
        = \frac{1}{(4\pi)^{\frac{n}{2}}} \int_0^t\!\!\!\int_{\mathbb{R}^{n - 1}} \frac{\left(\frac{x_n}{\tau^{\frac{n + 2}{2}}}e^{-\frac{\abs{y'}^2 + x_n^2}{4\tau}} - \frac{z_n}{\tau^{\frac{n + 2}{2}}}e^{-\frac{\abs{y'}^2 + z_n^2}{4\tau}}\right)\varphi(x' - y', t - \tau)}{\abs{x - z}^{\frac{n}{p} + \alpha}}\, \ud y'\, \ud \tau.
    \end{multline*}
    Taking the $L_p$-norm with respect to $x'$ and $z'$ and applying the general Minkowski inequality yields
    \begin{multline*}
        \left(\int_{\mathbb{R}^{n - 1}}\int_{\mathbb{R}^{n - 1}}\frac{\abs{u(x',x_n,t) - u(x',z_n,t)}^p}{\abs{x - z}^{n + \alpha p}}\, \ud x'\, \ud z'\right)^{\frac{1}{p}} \\
        \leq \frac{1}{(4\pi)^{\frac{n}{2}}} \int_0^t\!\!\!\int_{\mathbb{R}^{n - 1}} \frac{e^{-\frac{\abs{y'}^2}{4\tau}}\left\lvert\frac{x_n}{\tau^{\frac{n + 2}{2}}}e^{-\frac{x_n^2}{4\tau}} - \frac{z_n}{\tau^{\frac{n + 2}{2}}}e^{-\frac{z_n^2}{4\tau}}\right\rvert\norm{\varphi(t - \tau)}_{L_p(\mathbb{R}^{n - 1})}}{\abs{x_n - z_n}^{\frac{1}{p} + \alpha}}\, \ud y'\, \ud \tau \\
        = \frac{1}{(4\pi)^{\frac{1}{2}}} \int_0^t\frac{\left\lvert\frac{x_n}{\tau^{\frac{3}{2}}}e^{-\frac{x_n^2}{4\tau}} - \frac{z_n}{\tau^{\frac{3}{2}}}e^{-\frac{z_n^2}{4\tau}}\right\rvert}{\abs{x_n - z_n}^{\frac{1}{p} + \alpha}}\norm{\varphi(t - \tau)}_{L_p(\mathbb{R}^{n - 1})}\, \ud \tau.
    \end{multline*}
    Next we take the $L_p$-norm with respect to $x_n$ and $z_n$ and apply the general Minkowski inequality
    \begin{multline*}
        \left(\int_{\mathbb{R}^n_+}\int_{\mathbb{R}^n_+}\frac{\abs{u(x',x_n,t) - u(x',z_n,t)}^p}{\abs{x - z}^{n + \alpha p}}\, \ud x\, \ud z\right)^{\frac{1}{p}} \\
        \leq \frac{1}{(4\pi)^{\frac{1}{2}}} \int_0^t\left(\int_0^\infty\!\!\!\int_0^\infty\frac{\left\lvert\frac{x_n}{\tau^{\frac{3}{2}}}e^{-\frac{x_n^2}{4\tau}} - \frac{z_n}{\tau^{\frac{3}{2}}}e^{-\frac{z_n^2}{4\tau}}\right\rvert^p}{\abs{x_n - z_n}^{1 + \alpha p}}\, \ud x_n\, \ud z_n\right)^{\frac{1}{p}} \norm{\varphi(t - \tau)}_{L_p(\mathbb{R}^{n - 1})}\, \ud \tau \\
        = \frac{1}{(4\pi)^{\frac{1}{2}}} \int_0^t\frac{2^{\frac{1}{p}}}{\tau^{1 + \frac{\alpha}{2} - \frac{1}{2p}}}\left(\int_0^\infty\!\!\!\int_0^\infty\frac{\left\lvert\bar x_n e^{-\bar x_n^2} - \bar z_n e^{-\bar z_n^2}\right\rvert^p}{\abs{\bar x_n - \bar z_n}^{1 + \alpha p}}\, \ud \bar x_n\, \ud \bar z_n\right)^{\frac{1}{p}} \norm{\varphi(t - \tau)}_{L_p(\mathbb{R}^{n - 1})}\, \ud \tau \\
= c(p) \int_0^t \frac{1}{\tau^{1 + \frac{\alpha}{2} - \frac{1}{2p}}} \norm{\varphi(t - \tau)}_{L_p(\mathbb{R}^{n - 1})}\, \ud \tau.
    \end{multline*}
    Finally we take the $L_q$-norm and apply the general Minkowski inequality. This shows that
    \begin{equation}\label{eq49}
    \begin{aligned}
        \norm{I_1}_{L_q(0,T;W^{\alpha}_p(\mathbb{R}^n_+))} \leq& c(p) \int_0^T \frac{1}{\tau^{1 + \frac{\alpha}{2} - \frac{1}{2p}}}\, \ud \tau \norm{\varphi}_{L_q(0,T;L_p(\mathbb{R}^{n - 1}))} \\
        \leq& c(p,T) \norm{\varphi}_{L_q(0,T;L_p(\mathbb{R}^{n - 1}))}
    \end{aligned}
    \end{equation}
    if and only if $\alpha < \frac{1}{p}$.

    Next, let us consider $I_2$. Dividing by $\abs{x' - z'}^{\frac{n}{p} + \alpha}$ and applying the H\"older inequality with respect to $y'$ leads to
    \begin{multline*}
        \frac{1}{(4\pi)^{\frac{n}{2}}}\int_0^t \frac{z_n}{\tau^{\frac{n + 2}{2}}}e^{-\frac{z_n^2}{4\tau}}\underbrace{\left(\int_{\mathbb{R}^{n - 1}} \abs{e^{-\frac{\abs{x' - y'}^2}{4\tau}} - e^{-\frac{\abs{z' - y'}^2}{4\tau}}}^{\frac{p'}{2}}\, \ud y'\right)^{\frac{1}{p'}}}_{J_1} \\
        \cdot \left(\int_{\mathbb{R}^{n - 1}}\frac{\abs{e^{-\frac{\abs{x' - y'}^2}{4\tau}} - e^{-\frac{\abs{z' - y'}^2}{4\tau}}}^{\frac{p}{2}}}{\abs{x - z}^{n + \alpha p}}\abs{\varphi(y',t - \tau)}^p\, \ud y'\right)^{\frac{1}{p}}\, \ud \tau =: \bar I_2.
    \end{multline*}
    Let us calculate $J_1$. From the Mean Value Theorem we have that, for some $\theta \in (0,1)$
    \begin{multline*}
        J_1 = \left(\int_{\mathbb{R}^{n - 1}} \abs{\frac{\abs{x' - z'}}{2\tau}e^{-\frac{\abs{x' - (y' + \theta(x' - z'))}^2}{4\tau}}\abs{x' - (y' + \theta(x' - z'))}}^{\frac{p'}{2}}\, \ud y'\right)^{\frac{1}{p'}} \\
        \leq c(n,p) \frac{\tau^{\frac{n - 1}{2p'}}}{\tau^{\frac{1}{4}}}\abs{x' - z'}^{\frac{1}{2}}.
    \end{multline*}
    Taking the $L_p$-norm with respect to $x$ and $z$ we get
    \begin{multline*}
        \left(\int_{\mathbb{R}^n_+}\int_{\mathbb{R}^n_+} \abs{\bar I_2}^p\, \ud x\, \ud y\right)^{\frac{1}{p}} \leq c(n,p) \Bigg(\int_{\mathbb{R}^n_+}\int_{\mathbb{R}^n_+}\Bigg\lvert\int_0^t \frac{z_n}{\tau^{\frac{n + 2}{2}}}e^{-\frac{z_n^2}{4\tau}} \frac{\tau^{\frac{n - 1}{2p'}}}{\tau^{\frac{1}{4}}}\abs{x' - z'}^{\frac{1}{2}} \\
        \cdot \left(\int_{\mathbb{R}^{n - 1}}\frac{\abs{e^{-\frac{\abs{x' - y'}^2}{4\tau}} - e^{-\frac{\abs{z' - y'}^2}{4\tau}}}^{\frac{p}{2}}}{\abs{x - z}^{n + \alpha p}}\abs{\varphi(y',t - \tau)}^p\, \ud y'\right)^{\frac{1}{p}}\, \ud \tau\Bigg\rvert^p\, \ud x\, \ud y\Bigg)^{\frac{1}{p}} \\
        \leq c(n,p)\int_0^t \frac{1}{\tau^{\frac{n + 1}{2}}} \frac{\tau^{\frac{n - 1}{2p'}}}{\tau^{\frac{1}{4}}} \\
        \cdot \left(\int_{\mathbb{R}^{n - 1}}\int_{\mathbb{R}^n_+}\int_{\mathbb{R}^{n - 1}}e^{-\frac{z_n^2p}{4\tau}}\frac{\abs{e^{-\frac{\abs{x' - y'}^2}{4\tau}} - e^{-\frac{\abs{z' - y'}^2}{4\tau}}}^{\frac{p}{2}}}{\abs{x' - z'}^{n - 1 + p(\alpha -\frac{1}{2})}}\abs{\varphi(y',t - \tau)}^p\, \ud x'\, \ud z\, \ud y'\right)^{\frac{1}{p}}\, \ud \tau,
    \end{multline*}
    where in the last inequality we used Lemma \ref{l2}, the General Minkowski inequality and we integrated with respect to $x_n$. Next we change the variables
    \begin{align*}
        \frac{x' - y'}{2\sqrt{\tau}} = \bar x' & & \ud x' = 2^{n - 1}\tau^{\frac{n - 1}{2}}\, \ud \bar x' & & \frac{z' - y'}{2\sqrt{\tau}} = \bar z' & & \ud z' = 2^{n - 1}\tau^{\frac{n - 1}{2}}\, \ud \bar z',
    \end{align*}
    hence
    \begin{multline*}
        \left(\int_{\mathbb{R}^n_+}\int_{\mathbb{R}^n_+} \abs{\bar I_2}^p\, \ud x\, \ud y\right)^{\frac{1}{p}} \leq c(n,p)\int_0^t \frac{1}{\tau^{\frac{n + 1}{2}}} \frac{\tau^{\frac{n - 1}{2p'}}}{\tau^{\frac{1}{4}}} \frac{\tau^{\frac{n - 1}{p}}}{\tau^{\frac{1}{2p}(n - 1 + p(\alpha - \frac{1}{2}))}}\\
        \cdot \left(\int_{\mathbb{R}^n_+}\int_{\mathbb{R}^{n - 1}}e^{-\frac{z_n^2p}{4\tau}}\frac{\abs{e^{-\abs{\bar x'}^2} - e^{{\abs{\bar z'}^2}}}^{\frac{p}{2}}}{\abs{\bar x' - \bar z'}^{n - 1 + p(\alpha -\frac{1}{2})}}\, \ud x'\, \ud z\right)^{\frac{1}{p}}\norm{\varphi(t - \tau)}_{L_p(\mathbb{R}^{n - 1})}\, \ud \tau \\
        = c(n,p) \int_0^t \frac{\tau^{\frac{1}{2p}}}{\tau^{\frac{n + 1}{2}}} \frac{\tau^{\frac{n - 1}{2p'}}}{\tau^{\frac{1}{4}}} \frac{\tau^{\frac{n - 1}{p}}}{\tau^{\frac{1}{2p}(n - 1 + p(\alpha - \frac{1}{2}))}} \norm{\varphi(t - \tau)}_{L_p(\mathbb{R}^{n - 1})}\, \ud \tau \\
        = c(n,p) \int_0^t \frac{1}{\tau^{1 + \frac{\alpha}{2} - \frac{1}{2p}}} \norm{\varphi(t - \tau)}_{L_p(\mathbb{R}^{n - 1})}\, \ud \tau.
    \end{multline*}
    Next we apply the $L_q$-norm and use the general Minkowski inequality. It yields
    \begin{multline*}
        \left(\int_0^T\left(\int_{\mathbb{R}^n_+}\int_{\mathbb{R}^n_+}\frac{\abs{u(x',z_n,t) - u(z',z_n,t)}^p}{\abs{x - z}^{n + \alpha p}}\, \ud x\, \ud z\right)^{\frac{q}{p}}\, \ud t\right)^{\frac{1}{q}} \\
        \leq c(n,p) \norm{\varphi}_{L_q(0,T;L_p(\mathbb{R}^{n - 1}))} \int_0^T \frac{1}{\tau^{1 + \frac{\alpha}{2} - \frac{1}{2p}}} \, \ud \tau \leq c(n,p,T) \norm{\varphi}_{L_q(0,T;L_p(\mathbb{R}^{n - 1}))}
    \end{multline*}
    if and only if $\alpha < \frac{1}{p}$. From \eqref{eq49} and the inequality above we conclude the proof.
\end{proof}

\begin{lem}\label{l6}
    Let us assume that $\varphi \in L_q(0,T;W_p^{1 - \frac{1}{p}}(\mathbb{R}^{n - 1}))$, where $1 \leq p < \infty$, $1 \leq q \leq \infty$. Then $\Ud_{x'} u \in L_q(0,T;L_p(\mathbb{R}^n_+))$ and
    \begin{equation*}
        \norm{\Ud_{x'} u(x,t)}_{L_q(0,T;L_p(\mathbb{R}^n_+))} \leq c(n,p) \norm{\varphi}_{L_q(0,T;W^{1 - \frac{1}{p}}_p(\mathbb{R}^{n - 1}))}
    \end{equation*}
    holds.
\end{lem}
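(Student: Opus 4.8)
The plan is to reduce to a single tangential derivative $\partial_{x_1}$ (the remaining tangential directions are identical by symmetry) and to follow the kernel analysis of Lemmas \ref{l7} and \ref{l14}, the novelty being that the one full tangential derivative we want on $u$ must be paid for by the $\tfrac1p$ of a derivative that $\varphi$ carries in excess of $L_p$. Differentiating \eqref{r2} under the integral, then renaming $\tau\mapsto t-\tau$ and centring $y'\mapsto x'-y'$, gives
\[
    \partial_{x_1}u(x,t)=-\frac{1}{2(4\pi)^{n/2}}\int_0^t\!\!\!\int_{\mathbb{R}^{n-1}}\frac{y_1\,x_n}{\tau^{(n+4)/2}}\,e^{-\frac{\abs{y'}^2+x_n^2}{4\tau}}\,\varphi(x'-y',t-\tau)\,\ud y'\,\ud\tau .
\]
The kernel is odd in $y_1$, so its $y'$-integral vanishes (the tangential counterpart of Lemma \ref{l4}); hence I may subtract $\varphi(x',t-\tau)$ without changing the value, obtaining
\[
    \partial_{x_1}u(x,t)=-\frac{1}{2(4\pi)^{n/2}}\int_0^t\!\!\!\int_{\mathbb{R}^{n-1}}\frac{y_1\,x_n}{\tau^{(n+4)/2}}\,e^{-\frac{\abs{y'}^2+x_n^2}{4\tau}}\big[\varphi(x'-y',t-\tau)-\varphi(x',t-\tau)\big]\,\ud y'\,\ud\tau ,
\]
so the first difference of $\varphi$, which is exactly what the Slobodecki seminorm of $W^{1-1/p}_p$ measures, appears explicitly.

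Next I would take the $L_p(\mathbb{R}^n_+)$-norm in $x=(x',x_n)$. Since the kernel factorises, the general Minkowski inequality (Lemma \ref{l3}) separates the two spatial groups: the normal factor contributes $\big(\int_0^\infty x_n^p e^{-p x_n^2/(4\tau)}\,\ud x_n\big)^{1/p}=c(p)\,\tau^{1/2+1/(2p)}$, while the tangential factor produces the $L_p$-modulus of continuity $\omega(y',s):=\norm{\varphi(\cdot-y',s)-\varphi(\cdot,s)}_{L_p(\mathbb{R}^{n-1})}$. Using the translation identity $\int_{\mathbb{R}^{n-1}}\omega(y',s)^p\,\abs{y'}^{-(n-1)-p(1-1/p)}\,\ud y'=[\varphi(\cdot,s)]^p_{W^{1-1/p}_p(\mathbb{R}^{n-1})}$, a H\"older splitting in $y'$ against the Slobodecki weight is designed to convert the $y'$-integral into precisely this seminorm, after which the remaining time integral would be handled by Young's inequality.

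The hard part is exactly this last combination. A parabolic scaling count $\varphi\mapsto\varphi(\lambda\,\cdot,\lambda^2\,\cdot)$ shows that both sides of the claimed estimate are homogeneous of the same degree, which is why the constant is $c(n,p)$ with no dependence on $T$; thus the inequality sits at the \emph{critical} exponent, and any soft decoupling of the three integrations $(y',\tau,x_n)$ is too crude. Concretely, if one performs the $y'$-H\"older at fixed $\tau$ and only then integrates in time, the surviving time kernel is $K(\tau)=c\,\tau^{-1}$, non-integrable at $\tau=0$; if instead one pulls the $y'$-integral out by Minkowski and does the $\tau$-integral first, the accompanying weight fails to be locally $p'$-integrable against the Slobodecki weight. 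Either way a logarithm is lost. I expect this criticality to be the main obstacle.

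The resolution I would adopt is to keep the Gaussian factor $e^{-(\abs{y'}^2+x_n^2)/(4\tau)}$ intact, so that the difference length $\abs{y'}$, the normal depth $x_n$, and the parabolic time scale $\sqrt\tau$ remain coupled: the integration of $x_n$ over the half-line then supplies exactly the missing $\tfrac1p$ derivative, and the $y'$-integral is estimated against the scale-localised seminorm $\big(\int_{\abs{y'}\le\sqrt\tau}\omega(y',\cdot)^p\,\abs{y'}^{-(n-1)-p(1-1/p)}\,\ud y'\big)^{1/p}$, whose vanishing as $\tau\to0$ absorbs the $\tau^{-1}$. This step is nothing but the trace/extension characterisation of $W^{1-1/p}_p(\mathbb{R}^{n-1})$ realised through the parabolic half-space. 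In its cleanest form ($p=q=2$) it follows immediately by Fourier transform in $(x',t)$: the solution satisfies $\widehat u(\xi',x_n,\sigma)=\widehat\varphi(\xi',\sigma)\,e^{-x_n\sqrt{\abs{\xi'}^2+i\sigma}}$, and $\int_0^\infty e^{-2x_n\,\mathrm{Re}\sqrt{\abs{\xi'}^2+i\sigma}}\,\ud x_n=(2\,\mathrm{Re}\sqrt{\abs{\xi'}^2+i\sigma})^{-1}$ together with the comparison of $\mathrm{Re}\sqrt{\abs{\xi'}^2+i\sigma}$ with $\abs{\xi'}+\abs{\sigma}^{1/2}$ yields $\norm{\partial_{x_1}u}_{L_2}^2\le c\int\abs{\xi'}\,\abs{\widehat\varphi}^2\,\ud\xi'\,\ud\sigma$, which is the $\dot H^{1/2}$-seminorm. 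For general $p,q$ the same symbol must be treated by a parabolic Littlewood--Paley / multiplier argument, and this scale-localised estimate is the technical core of the proof.
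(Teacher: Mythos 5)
Your setup coincides with the paper's: you transfer the tangential derivative onto $y'$, use the oddness of the kernel (the tangential case of Lemma \ref{l4}) to subtract $\varphi(x',t-\tau)$, and you correctly diagnose that the estimate is critical, so that a complete decoupling of the $y'$, $x_n$ and $\tau$ integrations loses a logarithm. The gap is in the resolution, which you never actually carry out: the mechanism you propose --- that the scale-localised seminorm $\bigl(\int_{\abs{y'}\le\sqrt\tau}\omega(y',\cdot)^p\abs{y'}^{-(n-1)-p(1-1/p)}\,\ud y'\bigr)^{1/p}$ tends to $0$ as $\tau\to0$ and therefore ``absorbs'' the kernel $\tau^{-1}$ --- does not work. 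A quantity tending to zero is not integrable against $\ud\tau/\tau$ unless it satisfies a Dini condition, and finiteness of the $W^{1-1/p}_p$ seminorm controls only the total integral $\int\omega(y')^p\abs{y'}^{-(n-2+p)}\,\ud y'$, not the rate at which its localisation to $\abs{y'}\le\sqrt\tau$ vanishes: if that integrand behaves like $\abs{y'}^{-(n-1)}(\log(1/\abs{y'}))^{-2}$ near the origin, the localised piece decays only like $(\log(1/\tau))^{-1/p}$ and $\int_0(\log(1/\tau))^{-1/p}\,\ud\tau/\tau$ still diverges. Moreover, the order of operations you describe (integrate $x_n$ first) destroys the only coupling that matters, since the $x_n$-Gaussian ties $x_n$ to $\tau$, not to $y'$. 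Your fallback via Fourier multipliers is also insufficient for the statement as given, which allows $p=1$ and $q\in\{1,\infty\}$, where multiplier theorems fail.

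The paper closes the argument differently and more simply: after the $L_p(\ud x')$ Minkowski step it integrates in $\tau$ over $(0,\infty)$ \emph{first} (the residual $t$-dependence of the modulus of continuity is handled by Young's convolution inequality in time, which is the only place $L_q$ enters), using $\int_0^\infty\tau^{-(n+2)/2}e^{-(\abs{y'}^2+x_n^2)/(16\tau)}\,\ud\tau=c\,(\abs{y'}^2+x_n^2)^{-n/2}$. This reduces the problem to the elliptic trace estimate for a Poisson-type kernel: a single H\"older inequality in $y'$, splitting $(\abs{y'}^2+x_n^2)^{-n/2}$ into the weights $(\abs{y'}^2+x_n^2)^{-\frac{n-1}{2}-\frac p2+\frac14}$ (inside the $L_p$ factor carrying $\omega^p$) and $(\abs{y'}^2+x_n^2)^{-\frac{n-1}{2}-\frac{p'}{4p}}$ (inside the conjugate factor), keeps $y'$ and $x_n$ coupled in both pieces; the conjugate factor evaluates to $c\,x_n^{-1/(2p)}$, and the subsequent $x_n$-integration of $x_n^{-1/2}(\abs{y'}^2+x_n^2)^{-\frac{n-1}{2}-\frac p2+\frac14}$ over $(0,\infty)$ produces exactly the Slobodecki weight $\abs{y'}^{-(n-2+p)}=\abs{y'}^{-(n-1)-p(1-1/p)}$. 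It is this exact evaluation of the time integral, not a vanishing argument, that removes the borderline $\tau^{-1}$ singularity; to repair your proof you would need to replace your final step by this computation (or an equivalent coupled estimate).
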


\begin{proof}
    We follow and generalize \cite[Sec. 4, $\S$3]{lad}. In view of \eqref{r2} and Lemma \ref{l4} we have
    \begin{equation*}
        \Ud_{x'} u(x,t) = -2 \int_0^{\infty}\!\!\!\int_{\mathbb{R}^{n - 1}} \Ud_{y'}\partial_{x_n}\Gamma(y',x_n,\tau)\big(\varphi(x' - y',t - \tau) - \varphi(x',t - \tau)\big)\, \ud y'\, \ud \tau.
     \end{equation*}
     Using Lemma \ref{l2}, the H\"older inequality with respect to $x'$ and the general Minkowski inequality yields
     \begin{multline*}
         \norm{\Ud_{x'} u(x,t)}_{L_p(\mathbb{R}^{n - 1})} \leq c(n)\int_0^{\infty} \frac{1}{\tau^{\frac{n + 2}{2}}}\int_{\mathbb{R}^{n - 1}}e^{-\frac{\abs{y'}^2 + x_n^2}{16\tau}}\\
         \cdot \left(\int_{\mathbb{R}^{n - 1}}\abs{\mu(x' - y',t - \tau) - \mu(x',t - \tau)}^p\, \ud x'\right)^{\frac{1}{p}}\,\ud y\, \ud \tau \\
         = c(n)\int_{\mathbb{R}^{n - 1}} \frac{1}{(\abs{y'}^2 + x_n^2)^{\frac{n}{2}}} \left(\int_{\mathbb{R}^{n - 1}}\abs{\varphi(x' - y',t - \tau) - \varphi(x',t - \tau)}^p\, \ud x'\right)^{\frac{1}{p}}\,\ud y.
     \end{multline*}
     Applying the H\"older inequality with respect to $y'$ gives
     \begin{multline*}
         \norm{\Ud_{x'} u(x,t)}_{L_p(\mathbb{R}^{n - 1})} \leq c(n) \left(\int_{\mathbb{R}^{n - 1}}\frac{\int_{\mathbb{R}^{n - 1}}\abs{\varphi(x' - y',t) - \varphi(x',t)}^p\, \ud x'}{(\abs{y'}^2 + x_n^2)^{\frac{n - 1}{2} + \frac{p}{2} - \frac{1}{4}}}\, \ud y'\right)^{\frac{1}{p}} \\
         \cdot \left(\int_{\mathbb{R}^{n - 1}} \frac{1}{(\abs{y'}^2 + x_n^2)^{\frac{n - 1}{2} + \frac{p'}{4p}}}\, \ud y'\right)^{\frac{1}{p'}} \\
         = c(n,p) \frac{1}{x_n^{\frac{1}{2p}}} \left(\int_{\mathbb{R}^{n - 1}}\frac{\int_{\mathbb{R}^{n - 1}}\abs{\varphi(x' - y',t) - \varphi(x',t)}^p\, \ud x'}{(\abs{y'}^2 + x_n^2)^{\frac{n - 1}{2} + \frac{p}{2} - \frac{1}{4}}}\, \ud y'\right)^{\frac{1}{p}}.
     \end{multline*}
     Next we take the $L_p$ norm with respect to $x_n$
     \begin{equation*}
         \norm{\Ud_{x'} u(x,t)}_{L_p(\mathbb{R}_+^n)} \leq c(n,p) \left(\int_{\mathbb{R}^{n - 1}}\frac{\int_{\mathbb{R}^{n - 1}}\abs{\varphi(x' - y',t) - \varphi(x',t)}^p\, \ud x'}{\abs{y'}^{n - 2 + p}}\, \ud y'\right)^{\frac{1}{p}},
     \end{equation*}
     and he $L_q$ norm with respect to $t$. This ends the proof.
\end{proof}

\begin{lem}\label{l15}
    Suppose that $\varphi(x',t)\in W^{1 - \frac{1}{p}, \frac{1}{2} - \frac{1}{2p}}_{p,q}(\mathbb{R}^{n - 1}\times(0,T))$, where $1 \leq q \leq p < \infty$. Then $\partial_{x_n} u(x,t) \in L_q(0,T;L_p(\mathbb{R}^n_+))$ and
    \begin{equation*}
         \norm{\partial_{x_n}u}_{L_q(0,T;L_p(\mathbb{R}^n_+))} \leq c(n,p,T)\norm{\varphi}_{W_{p,q}^{1 - \frac{1}{p}, \frac{1}{2} - \frac{1}{2p}}(\mathbb{R}^{n - 1}\times (0,T))}
    \end{equation*}
    holds.
\end{lem}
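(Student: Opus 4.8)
The plan is to start from the representation \eqref{r2} and differentiate once more in the normal direction, so that
\[
\partial_{x_n}u(x,t) = -2\int_0^t\!\!\!\int_{\mathbb{R}^{n-1}}\partial_{x_n}^2\Gamma(x'-y',x_n,t-\tau)\varphi(y',\tau)\, \ud y'\, \ud\tau .
\]
Because the normal direction is the critical one, the second normal derivative of $\Gamma$ is too singular to be integrated against $\varphi$ directly, and the whole point is to manufacture a difference of $\varphi$ by subtracting a well-chosen value. For the tangential derivative this was done in Lemma \ref{l6} using $\int_{\mathbb{R}^{n-1}}\Ud_{y'}\partial_{x_n}\Gamma\,\ud y'=0$. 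Here the analogous tangential moment $\int_{\mathbb{R}^{n-1}}\partial_{x_n}^2\Gamma(y',x_n,\sigma)\,\ud y'$ equals $\partial_{x_n}^2 G(x_n,\sigma)$, where $G(x_n,\sigma)=(4\pi\sigma)^{-1/2}e^{-x_n^2/(4\sigma)}$ is the one-dimensional heat kernel, and this does \emph{not} vanish. However, by the one-dimensional heat equation $\partial_{x_n}^2 G=\partial_\sigma G$, and since $G(x_n,\sigma)\to 0$ both as $\sigma\to 0^+$ and as $\sigma\to\infty$ for $x_n>0$, the full space--time moment vanishes,
\[
\int_0^\infty\!\!\!\int_{\mathbb{R}^{n-1}}\partial_{x_n}^2\Gamma(y',x_n,\sigma)\, \ud y'\, \ud\sigma = \int_0^\infty \partial_\sigma G(x_n,\sigma)\, \ud\sigma = 0 .
\]
It is this \emph{temporal} moment, rather than a purely spatial one, that forces the time regularity of $\varphi$ into the estimate and thereby explains the index $\tfrac12-\tfrac1{2p}$ in the hypothesis.

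Accordingly, I would change variables $z'=x'-y'$, $\sigma=t-\tau$, extend $\varphi$ by zero to negative times, and use the vanishing moment to subtract the endpoint value $\varphi(x',t)$. Splitting $\int_0^\infty=\int_0^t+\int_t^\infty$ and combining the partial moment $\int_0^t\partial_\sigma G\,\ud\sigma=G(x_n,t)$ with the full moment above shows, with no compatibility assumption at $t=0$, that
\[
\partial_{x_n}u(x,t) = -2\int_0^\infty\!\!\!\int_{\mathbb{R}^{n-1}}\partial_{x_n}^2\Gamma(z',x_n,\sigma)\big[\varphi(x'-z',t-\sigma)-\varphi(x',t)\big]\, \ud z'\, \ud\sigma .
\]
I then write the bracket as $[\varphi(x'-z',t-\sigma)-\varphi(x',t-\sigma)]+[\varphi(x',t-\sigma)-\varphi(x',t)]$, giving $\partial_{x_n}u=A+B$, where $A$ carries the tangential difference and $B$ carries the temporal difference; in $B$ the $z'$-integral collapses the kernel to $\partial_{x_n}^2 G=\partial_\sigma G$.

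Term $A$ is estimated exactly along the lines of Lemma \ref{l6}: apply the H\"older inequality in $z'$ to split the Gaussian, take the $L_p$-norm in $x_n$ via the substitution $x_n=\sqrt\sigma\,\xi$ together with Lemma \ref{l2}, integrate out $\sigma$ against $e^{-\abs{z'}^2/(4\sigma)}$, and use the general Minkowski inequality (Lemma \ref{l3}); the parabolic homogeneity of $\partial_{x_n}^2\Gamma$ reproduces precisely the weight $\abs{z'}^{-(n-1)-p(1-1/p)}$, so that $A$ is controlled by $\norm{\varphi}_{L_q(0,T;W^{1-1/p}_p(\mathbb{R}^{n-1}))}$, with no restriction on $q$. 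For term $B$, taking the $L_p$-norm in $x_n$ of $\partial_\sigma G$ (again by the $x_n=\sqrt\sigma\,\xi$ scaling and Lemma \ref{l2}) produces a factor $\sigma^{-3/2+1/(2p)}=\sigma^{-1-s}$ with $s=\tfrac12-\tfrac1{2p}$, whence
\[
\norm{B(\cdot,t)}_{L_p(\mathbb{R}^n_+)} \le c(p)\int_0^\infty \sigma^{-1-s}\norm{\varphi(\cdot,t-\sigma)-\varphi(\cdot,t)}_{L_p(\mathbb{R}^{n-1})}\, \ud\sigma .
\]
This is a one-sided (Marchaud-type) fractional integral of order $s$ in time, whose $L_q(0,T)$-norm I would bound by the temporal seminorm of $\varphi$ appearing in Definition \ref{don}; here the hypothesis $q\le p$ enters, through the mixed-norm form of the general Minkowski inequality, to interchange the $L_p(\mathbb{R}^{n-1})$ and $L_q(0,T)$ integrations in the correct order and to recover the $q$-power Gagliardo structure in $\sigma$ by a Hardy-type argument.

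I expect term $B$ to be the main obstacle, for two reasons. First, one must justify the passage to the representation with the subtracted endpoint value, that is, the legitimacy of extending the $\sigma$-integral to infinity and exchanging it with differentiation; this rests on Lemma \ref{l4} and a dominated-convergence argument. Second, and more seriously, the range $\sigma>t$, where $\varphi(x',t-\sigma)=0$, leaves an integrand $\sigma^{-1-s}\varphi(x',t)$ whose time integral is of the near-$t=0$ Hardy type $\int_0^T t^{-sq}\norm{\varphi(\cdot,t)}_{L_p}^q\,\ud t$. This must be absorbed into the temporal seminorm by the fractional Hardy inequality in the relevant range of $sq$; it is precisely the $T$-dependence of the constant $c(n,p,T)$ in the statement that leaves room to control the lower-order pieces generated at this step.
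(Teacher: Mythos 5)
Your strategy is essentially the paper's: the paper likewise works with the second normal derivative of the kernel, splits off a tangential difference $\varphi(x'-y',t-\tau)-\varphi(x',t-\tau)$ handled by the machinery of Lemma \ref{l6} (your term $A$; note $\partial^2_{x_n}\Gamma$ obeys the same Gaussian bounds as $\Ud_{y'}\partial_{x_n}\Gamma$ via Lemma \ref{l2}, so that computation does apply verbatim), and a temporal difference $\varphi(\cdot,t-\tau)-\varphi(\cdot,t)$ whose kernel integrates in $y'$ to $\partial_\tau$ of a one-dimensional Gaussian (your term $B$, the paper's $I_2$). The only structural difference is that the paper realizes the splitting by decomposing the kernel through the heat equation ($\partial^2_{x_n}\to\partial_\tau-\Delta_{y'}$, each piece paired with the difference it tolerates), whereas you keep $\partial^2_{x_n}\Gamma$ intact and split the difference of $\varphi$; your justification of the subtraction via the vanishing full space--time moment of $\partial^2_{x_n}\Gamma$ is actually more explicit than the paper's appeal to Lemma \ref{l4}.

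The gap is in term $B$, precisely at the step you flag as the main obstacle. Once you take the $L_p(\ud x_n)$-norm of $\partial_\sigma G$ and arrive at $\norm{B(\cdot,t)}_{L_p}\leq c\int_0^\infty\sigma^{-1-s}N(t-\sigma,t)\,\ud\sigma$ with $s=\frac12-\frac1{2p}$, the Gaussian in $x_n$ is gone and the passage to the Gagliardo seminorm $\bigl(\int\!\!\int\sigma^{-1-sq}N^q\bigr)^{1/q}$ is no longer available: every H\"older split $\sigma^{-1-s}=\sigma^{-a}\sigma^{-b}$ produces the conjugate factor $\bigl(\int_0^\infty\sigma^{-bq'}\ud\sigma\bigr)^{1/q'}=\infty$, and Minkowski in the other order controls the $L_q(\ud t)$-norm only by an $L_1(\ud\sigma/\sigma^{1+s})$-modulus of continuity, a $B^s_{q,1}$-type quantity strictly stronger than the $W^s_q=B^s_{q,q}$ seminorm when $q>1$ (take $N$ depending on $\sigma$ alone to see the generic inequality fail). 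Moreover, the interchange you invoke is between $L_p(\mathbb{R}^{n-1})$ and $L_q(0,T)$, which is not the one that matters. The paper closes this step by reversing the order of operations: it writes $\tau^{-3/2}e^{-x_n^2/16\tau}=\tau^{-\alpha_1}e^{-x_n^2/32\tau}\cdot\tau^{-\alpha_2}e^{-x_n^2/32\tau}$, applies H\"older in $\tau$ \emph{before} integrating in $x_n$ (the retained Gaussian makes the conjugate $\tau$-integral converge and yields a power of $x_n$), and only then takes the $L_p(\ud x_n)$-norm and swaps it with the $\tau$-integration by the general Minkowski inequality --- this swap of $L_p(\ud x_n)$ with $L_q(\ud\tau)$ is where $q\leq p$ is genuinely used --- obtaining the pointwise-in-$t$ bound $\norm{I_2(\cdot,t)}_{L_p(\mathbb{R}^n_+)}\leq c\bigl(\int_0^\infty\tau^{-1-sq}\abs{N(t-\tau,t)}^q\ud\tau\bigr)^{1/q}$, which integrates in $t$ directly to the temporal seminorm. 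Your closing remark about the range $\sigma>t$ producing $\int_0^T t^{-sq}\norm{\varphi(\cdot,t)}^q_{L_p}\ud t$ is accurate, and the paper's own proof contains the same term without comment.
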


\begin{proof}
    In view of \eqref{r2} and Lemma \ref{l4} we have
    \begin{multline*}
            \partial_{x_n}u(x,t) = \frac{1}{(4\pi)^{\frac{n}{2}}} \int_0^{\infty}\!\!\!\int_{\mathbb{R}^{n - 1}} \frac{1}{\tau^{\frac{n}{2}}}\partial^2_{x_nx_n}e^{-\frac{\abs{y'}^2 + x_n^2}{4\tau}}\varphi(x' - y',t - \tau)\, \ud y'\, \ud \tau \\
            = \frac{1}{(4\pi)^{\frac{n}{2}}} \int_0^{\infty}\!\!\!\int_{\mathbb{R}^{n - 1}} \frac{1}{\tau^{\frac{n}{2}}}\partial_{\tau}e^{-\frac{\abs{y'}^2 + x_n^2}{4\tau}}\big(\varphi(x' - y',t - \tau) - \varphi(x',t - \tau)\big)\, \ud y'\, \ud \tau \\
            - \frac{1}{(4\pi)^{\frac{n}{2}}} \int_0^{\infty}\!\!\!\int_{\mathbb{R}^{n - 1}} \frac{1}{\tau^{\frac{n}{2}}}\Ud^2_{y'y'}e^{-\frac{\abs{y'}^2 + x_n^2}{4\tau}}\big(\varphi(x' - y',t - \tau) - \varphi(x' - y',t)\big)\, \ud y'\, \ud \tau\\
             =: I_1 + I_2.
     \end{multline*}

     Let us consider $I_1$. Taking the absolute value of both sides and using Lemma \ref{l2} we get
     \begin{equation*}
         \abs{I_1} \leq c\int_0^{\infty}\frac{1}{\tau^{\frac{n + 2}{2}}}\int_{\mathbb{R}^{n - 1}}e^{-\frac{\abs{y'}^2 + x_n^2}{16\tau}} \abs{\varphi(x' - y',t - \tau) - \varphi(x',t - \tau)}\,\ud y\, \ud \tau.
     \end{equation*}
     Next we repeat the calculations from the proof of Lemma \ref{l6}. This yields
     \begin{equation}\label{r34}
         \norm{I_1}_{L_q(0,T;L_p(\mathbb{R}_+^n))} \leq c(n,p) \norm{\varphi}_{L_q(0,T;W^{1 - \frac{1}{p}}_p(\mathbb{R}^{n - 1}))}.
     \end{equation}

     It remains to estimate $I_2$. Since
     \begin{equation*}
         \abs{I_2} \leq c\int_0^{\infty}\frac{1}{\tau^{\frac{n + 2}{2}}}\int_{\mathbb{R}^{n - 1}}e^{-\frac{\abs{y'}^2 + x_n^2}{16\tau}} \abs{\varphi(x' - y',t - \tau) - \varphi(x' - y',t)}\,\ud y\, \ud \tau,
     \end{equation*}
     which follows from Lemma \ref{l2}, we can take the $L_p$ norm with respect to $x'$ and integrate over $y'$. We obtain
     \begin{equation}\label{r29}
         \norm{I_2}_{L_p(\mathbb{R}^{n - 1})} \leq c \int_0^\infty \frac{1}{\tau^{\frac{3}{2}}} e^{-\frac{x_n^2}{16\tau}} \abs{N(t-\tau,t)}\, \ud \tau,
     \end{equation}
     where
     \begin{equation*}
         N(t - \tau,t) := \left(\int_{\mathbb{R}^{n - 1}} \abs{\varphi(x',t - \tau) - \varphi(x',t)}^p\, \ud x' \right)^{\frac{1}{p}}.
     \end{equation*}
     Next we rewrite the right-hand side in \eqref{r29} as follows
     \begin{equation*}
         c\int_0^\infty \frac{1}{\tau^{\alpha_1}} e^{-\frac{x_n^2}{32\tau}} \abs{N(t-\tau,t)} \frac{1}{\tau^{\alpha_2}} e^{-\frac{x_n^2}{32\tau}}\, \ud \tau,
     \end{equation*}
     where $\alpha_1 + \alpha_2 = \frac{3}{2}$, and apply the H\"older inequality. This yields
     \begin{multline*}
         \norm{I_2}_{L_p(\mathbb{R}^{n - 1})} \leq c \left(\int_0^{\infty}\frac{1}{\tau^{\alpha_1q}}e^{-\frac{qx_n^2}{32\tau}}\abs{N(t - \tau,t)}^q\, \ud \tau\right)^{\frac{1}{q}} \left(\int_0^{\infty}\frac{1}{\tau^{\alpha_2q'}}e^{-\frac{q'x_n^2}{32\tau}}\, \ud \tau\right)^{\frac{1}{q'}} \\
         = \frac{c}{x_n^{2\alpha_2 - \frac{2}{q'}}} \left(\int_0^{\infty}\frac{1}{\tau^{\alpha_1q}}e^{-\frac{qx_n^2}{32\tau}}\abs{N(t - \tau,t)}^q\, \ud \tau\right)^{\frac{1}{q}}.
     \end{multline*}
     Taking the $L_q$ norm with respect to $x_n$ gives
     \begin{multline*}
         \norm{I_2}_{L_p(\mathbb{R}_+^n)} \leq c \left(\int_0^{\infty}\left(\int_0^{\infty}\frac{1}{\tau^{\alpha_1q}} \frac{1}{x_n^{2\alpha_2q - \frac{2q}{q'}}} e^{-\frac{qx_n^2}{32\tau}}\abs{N(t - \tau,t)}^q\, \ud \tau\right)^{\frac{p}{q}}\, \ud x_n\right)^{\frac{1}{p}} \\
         \leq c \left(\int_0^{\infty}\left(\int_0^{\infty}\frac{1}{\tau^{\alpha_1p}}\frac{1}{x_n^{2\alpha_2p - \frac{2p}{q'}}}e^{-\frac{px_n^2}{32\tau}}\, \ud x_n\right)^{\frac{q}{p}}\abs{N(t - \tau,t)}^q\, \ud \tau\right)^{\frac{1}{q}} = c \left(\int_0^{\infty} \frac{\abs{N(t - \tau,t)}^q}{\tau^{1 + q\left(\frac{1}{2} - \frac{1}{2p}\right)}}\, \ud \tau\right)^{\frac{1}{q}}.
     \end{multline*}
     Integrating with respect to $t$ in $q$-th power and taking into account the estimate \eqref{r34} we conclude the proof.
\end{proof}

\section{Estimates in bounded domains}\label{s5}

In this section we give the proof of Theorem \ref{t3}. We start with recalling a fundamental definition.

\begin{defi}
   We will say that $S := \partial \Omega$ belongs to $\mathcal{C}^m_{loc}$, if for every $x_0 \in S$ there exist a number $r > 0$ and a function $f\colon \mathbb{R}^{n - 1}\to\mathbb{R}$, $f\in\mathcal{C}^m_c(\mathbb{R}^{n - 1})$ such that
   \begin{equation*}
       \Omega \cap B(x_0,r) = \left\{x\in B(x_0,r)\colon x_n > f(x_1,\ldots,x_{n - 1})\right\}.
   \end{equation*}
\end{defi}

\begin{proof}[Proof of Theorem \ref{t3}]
For any fixed $\lambda > 0$ (it will be chosen later) we cover $\Omega$ by sets $\Omega^{(k)}$, $\Omega \subset \bigcup_{k = 0}^{N_\lambda}\Omega^{(k)}$, which satisfy
\begin{equation*}
	\frac{1}{2}\lambda < \dist(\Omega^0,\partial \Omega) < \lambda, \qquad \Omega^{(k)} := B(x_0^{(k)},r_\lambda) \cap S.
\end{equation*}
Next we introduce a partition of unity $\sum_{k = 0}^{N_\lambda} \eta^{(k)} = 1$, which is subordinated into the covering $\Omega^{(k)}$. Then, multiplying \eqref{p111} by $\eta^{(k)}$ we obtain
\begin{equation}\begin{aligned}\label{p112}
    &\eta^{(k)}u,_t - \triangle (\eta^{(k)}u) = -2\nabla \eta^{(k)} \nabla u - u \triangle \eta^{(k)} =: g^{(k)} & &\textrm{in } \Omega^T,\\
    &\eta^{(k)} u = \eta^{(k)} \varphi & &\textrm{on } S^T,\\
    &\eta^{(k)} u\vert_{t=0} = 0 & &\textrm{in } \Omega\times\{0\}.
    \end{aligned}
\end{equation}
Let us denote $u^{(k)} := \eta^{(k)}u$. Then, for $k = 0$ the solution to problem \eqref{p112} can be expressed in the form
\begin{align*}
	u^{(0)}(x,t) &= \int_{\mathbb{R}^n\times(0,T)} G(x - y, t - \tau) \left(-2 \nabla \eta^{(0)}(y)\nabla u (y,\tau) - \triangle \eta^{(0)}(y) u(y,\tau)\right)\ud y\, \ud \tau \\
	&= \int_{\mathbb{R}^n\times(0,T)} \nabla G(x - y, t - \tau)\left(-2 \nabla \eta^{(0)}(y)u(y,\tau)\right)\ud y\, \ud \tau \\
	&- \int_{\mathbb{R}^n\times(0,T)} G(x - y,t - \tau)\left(-2 \nabla^2\eta^{(0)}(y) + \triangle \eta^{(0)}\right)u(y,\tau) \ud y\, \ud \tau.
\end{align*}
For the last integral the estimate
\begin{multline*}
	\norm{\int_{\mathbb{R}^n\times(0,T)} G(x - y,t - \tau)\left(-2 \nabla^2\eta^{(0)}(y) + \triangle \eta^{(0)}\right)u(y,\tau) \ud y\, \ud \tau}_{L_q(0,T;L_p(\mathbb{R}^n))} \\
	\leq c_{n,p,q,\lambda}\norm{u}_{L_q(0,T;L_p(\Omega))}
\end{multline*}
holds, which implies that
\begin{equation*}
	\norm{u^{(0)}}_{W^{1,\frac{1}{2}}_{p,q}(\mathbb{R}^n\times(0,T))} \leq c_{n,p,q,\lambda}\norm{u}_{L_q(0,T;L_p(\Omega))}.
\end{equation*}

For $k > 0$ we introduce a local coordinate system $y = (y_1,\ldots,y_n)$ with the center at $x_0^{(k)}$, which we obtain from $x$ through translations and rotations. Then, from assumptions on $S$ we see that $S \cap \supp \eta^{(k)}$ is described by the equation $y_n = f^{(k)}(y_1,\ldots,y_{n - 1})$, $f^{(k)}(0) = 0$, $\nabla f^{(k)} (0) = 0$. 

Next, we straighten the set $\Omega \cap \supp \eta^{(k)}$ into the half-space through the mapping $\Phi$, $z = \Phi(y)$, where
\begin{align*}
	z' &= y', \\
	z_n &= y_n - f^{(k)}(y').	
\end{align*}
Then, \eqref{p112} in $z$-coordinates takes the form
\begin{equation}\begin{aligned}\label{p113}
    &u_{,t}^{(k)} - \triangle u^{(k)} = \left(\triangle_{\Phi^{-1}(z)} - \triangle\right)u^{(k)} + g^{(k)} & &\textrm{in } \mathbb{R}^n_+\times(0,T),\\
    &u^{(k)}\vert_{z_n = 0} = \varphi^{(k)} & &\textrm{on } \mathbb{R}^{n - 1}\times(0,T),\\
    &u^{(k)}\vert_{t=0} = 0 & &\textrm{in } \mathbb{R}^n_+\times\{0\}.
    \end{aligned}
\end{equation}
Consider now the problem
\begin{equation}\begin{aligned}\label{p114}
    &v_{,t}^{(k)} - \triangle v^{(k)} = 0 & &\textrm{in } \mathbb{R}^n_+\times(0,T),\\
    &v^{(k)}\vert_{z_n = 0} = \varphi^{(k)} & &\textrm{on } \mathbb{R}^{n - 1}\times(0,T),\\
    &v^{(k)}\vert_{t = 0} = 0 & &\textrm{in } \mathbb{R}^n_+\times\{0\}.
    \end{aligned}
\end{equation}
Then, the function $w^{(k)} = u^{(k)} - v^{(k)}$ satisfies
\begin{equation}\begin{aligned}\label{p115}
    &w_{,t}^{(k)} - \triangle w^{(k)} = \left(\triangle_{\Phi^{-1}(z)} - \triangle\right)u^{(k)} + g^{(k)} & &\textrm{in } \mathbb{R}^n_+\times(0,T),\\
    &w^{(k)}\vert_{z_n = 0} = 0 & &\textrm{on } \mathbb{R}^{n - 1}\times(0,T),\\
    &w^{(k)}\vert_{t=0} = 0 & &\textrm{in } \mathbb{R}^n_+\times\{0\}.
    \end{aligned}
\end{equation}
Now observe that
\begin{align*}
	\triangle_{\Phi^{-1}(z)} - \triangle &= \frac{\partial \Phi}{\partial y}\bigg\vert_{y = \Phi^{-1}(z)}\nabla_z \cdot \left(\frac{\partial \Phi}{\partial y}\bigg\vert_{y = \Phi^{-1}(z)}\nabla_z\right) - \nabla \cdot \nabla \\
	&= \left(\left(\frac{\partial \Phi}{\partial y}\bigg\vert_{y = \Phi^{-1}(z)}\right)^2 - 1\right)\triangle + \frac{\partial \Phi}{\partial y}\left(\frac{\partial y}{\partial z}\frac{\partial^2 \Phi}{\partial y^2}\right)\bigg\vert_{y = \Phi^{-1}(z)} \nabla
\end{align*}
Next we use the Green representation formula and integrate by parts the term containing the Laplacian. The integration is justified since the Green function vanishes on the boundary (see \cite{krz} for details). Finally, the solution to \eqref{p115} takes the form
\begin{multline}\label{eq46}
	w^{(k)}(s,t) = \int_{\mathbb{R}^n_+\times(0,T)} \nabla G(s - z, t - \tau) \left(\left(\frac{\partial \Phi}{\partial y}\bigg\vert_{y = \Phi^{-1}(z)}\right)^2 - 1\right) \nabla u^{(k)}\ud s\, \ud \tau \\
	+ \int_{\mathbb{R}^n_+\times(0,T)} G(s - z, t - \tau)\left(\nabla \left(\left(\frac{\partial \Phi}{\partial y}\bigg\vert_{y = \Phi^{-1}(z)}\right)^2 - 1\right) + \frac{\partial \Phi}{\partial y}\left(\frac{\partial y}{\partial z}\frac{\partial^2 \Phi}{\partial y^2}\right)\bigg\vert_{y = \Phi^{-1}(z)}\right)\nabla u^{(k)} \ud s\, \ud \tau \\
	+ \int_{\mathbb{R}^n_+\times(0,T)} G(s - z, t - \tau)g^{(k)}(s,\tau)\, \ud s\, \ud \tau.
\end{multline}
The first two integrals contain the small parameter $\lambda$. Let us examine the third integral. In $y$-coordinate it has the form
\begin{equation}\label{eq50}
	\int_0^T\!\!\!\int_{\Omega\cap \supp\eta^{(k)}} G(s - y, t - \tau) \left(-2 \nabla \eta^{(0)}(s)\nabla u (s,\tau) - \triangle \eta^{(0)}(s) u(s,\tau)\right)\ud s\, \ud \tau.
\end{equation}
We see that near the boundary we may write $\eta^{(k)}(s) = \eta^{(k)}_{tan}(s)\eta^{(k)}_{nor}(s)$, where the subscripts $tan$ and $nor$ denote the tangent and the normal parts. Since $\eta_{nor}^{(k)}(s) \equiv 1$ for every $k$, we infer that \eqref{eq50} does not involve differentiation along the normal direction. Hence we can integrate by parts which leads to 
\begin{multline*}
	-\int_0^T\!\!\!\int_{\Omega\cap \supp\eta^{(k)}} \nabla_{tan} G(s - y, t - \tau)\left(-2 \nabla_{tan} \eta^{(k)}(s) u(s,\tau)\right) \ud s\, \ud \tau \\
	+ \int_0^T\!\!\!\int_{\Omega\cap \supp\eta^{(k)}}G(s - y, t - \tau)\left(-2 \nabla^2_{tan} \eta^{(k)}(s) - \triangle \eta^{(k)}(s)\right) u(s,\tau)\ud s\, \ud \tau.
\end{multline*}
Passing to $z$-coordinate does not introduce tangent derivative, so for the representation \eqref{eq46} we have the estimate
\begin{equation}\label{eq45}
	\norm{w^{(k)}}_{W^{1,\frac{1}{2}}_{p,q}(\mathbb{R}^n_+\times(0,T))} \leq c_{n,p,q}\lambda\norm{u^{(k)}}_{L_q(0,T;W^1_p(\mathbb{R}^n_+))} + c_{n,p,q,\lambda}\norm{u}_{L_q(0,T;L_p(\Omega))}.
\end{equation}
Taking into account Theorem \ref{t2} and \eqref{eq45} we get instantly that
\begin{multline*}
	\norm{u^{(k)}}_{L_q(0,T;W^1_p(\mathbb{R}^n_+))} \leq \norm{u^{(k)}}_{W^{1,\frac{1}{2}}_{p,q}(\mathbb{R}^n_+\times(0,T))} \\
	\leq c_{n,p,q}\lambda\norm{u^{(k)}}_{L_q(0,T;W^1_p(\mathbb{R}^n_+))} + c_{n,p,q,\lambda}\norm{u}_{L_q(0,T;L_p(\Omega))} + \norm{\varphi^{(k)}}_{W^{1 - \frac{1}{p}, \frac{1}{2} - \frac{1}{2p}}_{p,q}(\mathbb{R}^{n - 1}\times(0,T))}.
\end{multline*}
Choosing $\lambda$ to small enough and summing over $k$ yields
\begin{equation*}
	\norm{u}_{L_q(0,T;W^1_p(\Omega))} \leq c_{n,p,q,\Omega} \norm{u}_{L_q(0,T;L_p(\Omega))} + c_{n,p,q,\Omega}\norm{\varphi}_{W^{1 - \frac{1}{p}, \frac{1}{2} - \frac{1}{2p}}_{p,q}(S^T)}.
\end{equation*}
To eliminate the first term on the right-hand side, we introduce a partition of unity $\{\psi_\alpha\}_{\alpha\in A}$ on $S$ such that $\supp \psi_\alpha = \overline{B_\alpha(\xi_\alpha,r_\alpha)}$. Next we change the variable in the formula for the solution
\begin{equation*}
   u(x,t) = \int_0^t\!\!\!\int_S n_i(\xi)\cdot \frac{\partial\Gamma(x-\xi,t-\tau)}{\partial \xi_i}\mu(\xi,\tau)\, \ud S_\xi \ud \tau,
\end{equation*}
using the mapping $\Phi_\alpha\colon \supp \psi_\alpha\to\mathbb{R}^n$, $\xi\xrightarrow{\Phi_\alpha} \lbrack\xi', f^{\alpha}(\xi')\rbrack$. With this in mind, we see that 
\begin{multline}\label{eq76}
	u(x,t) = \frac{1}{2}\frac{1}{(4\pi)^{\frac{n}{2}}} \int_0^t\!\!\!\int_{E^\alpha} \frac{\lbrack\nabla  f^{\alpha}(\xi'),-1\rbrack}{\sqrt{1 + \abs{\nabla f^{\alpha}}^2}} \cdot \frac{\lbrack x' - \xi',x_n - f^{\alpha}(\xi')\rbrack}{(t - \tau)^{\frac{n + 2}{2}}} e^{-\frac{\abs{x' - \xi'}^2 + \abs{x_n - f^{\alpha}(\xi')}^2}{4(t - \tau)}} \\
	\mu_{\alpha}(\xi',f^{\alpha}(\xi'),\tau)\sqrt{1 + \abs{\nabla f^{\alpha}}^2}\, \ud \xi'\, \ud \tau \\
	= c_n \int_0^t\!\!\!\int_{E^\alpha} \frac{\nabla  f^{\alpha}(\xi') \cdot \lbrack x' - \xi'\rbrack - x_n + f^{\alpha}(\xi')}{(t - \tau)^{\frac{n + 2}{2}}} e^{-\frac{\abs{x' - \xi'}^2 + \abs{x_n - f^{\alpha}(\xi')}^2}{4(t - \tau)}} \mu_{\alpha}(\xi',f^{\alpha}(\xi'),\tau)\, \ud \xi'\, \ud \tau,
\end{multline}
where $E^{\alpha} = \mathbb{R}^{n - 1} \cap \supp \psi(\xi')$ and $\cdot$ denotes the standard inner product. From the Mean Value Theorem it follows that
\begin{equation*}
	x_n - f^{\alpha}(\xi') = \nabla f^{\alpha}(\xi^*) \cdot \lbrack x' - \xi '\rbrack + x_n - f^{\alpha}(x').
\end{equation*}
Since
\begin{equation*}
	\abs{\nabla f^{\alpha}(\xi^*) \cdot \lbrack x' - \xi '\rbrack + x_n - f^{\alpha}(x')}^2 \geq \abs{\nabla f^{\alpha}(\xi^*) \cdot \lbrack x' - \xi '\rbrack}^2 +\abs{x_n - f^{\alpha}(x')}^2
\end{equation*}
we may apply Lemma \ref{l2} in \eqref{eq76}. It leads to
\begin{equation*}
	u(x,t) \leq c_n \int_0^t\!\!\!\int_{E^{\alpha}} \frac{1}{(t - \tau)^{\frac{n + 1}{2}}} e^{-\frac{c\abs{x' - \xi'}^2 + \abs{x_n - f^{\alpha}(x')}^2}{4(t - \tau)}} \mu_{\alpha}(\xi',f^{\alpha}(\xi'),\tau) \ud \xi'\, \ud \tau.
\end{equation*}
Next we repeat the proof of Lemma \ref{l7}. This concludes the proof.
\end{proof}

\end{document}